\newtheorem{theorem}{Theorem}
\newtheorem{lemma}{Lemma}
\newtheorem{cor}{Corollary}
\newtheorem{prop}{Proposition}
\newtheorem{defn}{Definition}
\def\@email#1#2{%
 \endgroup
 \patchcmd{\titleblock@produce}
  {\frontmatter@RRAPformat}
  {\frontmatter@RRAPformat{\produce@RRAP{*#1\href{mailto:#2}{#2}}}\frontmatter@RRAPformat}
  {}{}
}%
\begin{document}

\preprint{AIP/123-QED}

\title[On the Spectrum of the Periodic Anderson-Bernoulli Model]{On the Spectrum of the Periodic Anderson-Bernoulli Model}
% Force line breaks with \\
\author{William Wood}
 %Lines break automatically or can be forced with \\

 \email{woodwf@uci.edu}
\affiliation{ 
Department of Mathematics at University of California, Irvine%\\This line break forced with \textbackslash\textbackslash
}%
\newcommand{\edited}[1]{\textcolor{black}{#1}}

\date{\today}% It is always \today, today,
             %  but any date may be explicitly specified

\begin{abstract}
We analyze the spectrum of a discrete Schr\"{o}dinger operator with a potential given by a periodic variant of the Anderson Model.  In order to do so, we study the uniform hyperbolicity of a Schr\"odinger  cocycle generated by the $SL(2,\mathbb{R})$ transfer matrices.  In the specific case of the potential generated by an alternating sequence of random values we show that the almost sure spectrum consists of at most 4 intervals. 
\end{abstract}

\maketitle

\section{\label{sec:1}Introduction}

Discrete Schr\"{o}dinger operators $H_{\edited{V}}:l^2(\mathbb{Z})\rightarrow l^2(\mathbb{Z})$ 
\begin{equation}\label{eq:1}
    H_{\edited{V}} \phi(n)=\phi(n+1)+\phi(n-1)+V(n)\phi(n)
\end{equation} are the focus of this paper.  Here,  $V=\{V(n)\}_{n\in\mathbb{Z}}$ is a bounded, real sequence, and the spectrum of $H_V$ is the set \[\edited{\sigma(H_V)=\{\lambda: H_V-\lambda\text{ does not have a bounded inverse}\}.}\]  \edited{The spectral type (absolutely continuous, singular continuous, pure point, or mixed) and the topological structure of the spectrum of these operators are objects of study}, see \cite{d1} for more detail.  \edited{Since} \hyperlink{eq:1}{(1)} is a bounded, self-adjoint operator in a Hilbert space, its spectrum $\sigma(H_V)$ is a compact subset of $\mathbb{R}$.  If $V$ is a periodic sequence with period $m$, the spectrum consists of, at most, $m$ intervals.  \edited{If the potential is constant,} the spectrum is the set $[-2,2]+V(0)$. 

There are plenty of more complicated examples, such as the Almost Mathieu  Operator.  If $V_{AM}(n)=\lambda\cos(n\pi\alpha)$ where \edited{$\lambda\neq0,$} $\alpha\notin\mathbb{Q},$ and $n\in\mathbb{Z}^{\edited{+}}$, then $\sigma(H_{AM})$ is a Cantor set.  The spectrum was proven to be a Cantor set by Avila and Jitomirskaya\edited{\cite{AJ}}, and this problem was known as the Ten Martini Problem.  A nice survey of the subject was provided by Marx and Jitormiskaya \cite{mj}.  Similarly, if the potential is given by the Fibonacci substitution sequence, the spectrum is a Cantor set as well\edited{\cite{d1,d4}}.

The last example is the Anderson Model, \edited{where the potential depends on random variables}.  If $V(n)$ is a random potential, given by a sequence of iid random variables, then there exists an almost-sure spectrum $\sigma_{as}$, and we have \[\sigma_{as}(H_V)=\sigma(\triangle)+supp\;\mu\]
Here $\triangle$ is the free Laplacian,  $\sigma(\triangle)=[-2,2]$, \edited{ and $\mu$ is the random variable}.  Additional details \edited{about} this model can be found in \cite{s}. 

This paper will address a model inspired by the Anderson Model.  If the sequence \edited{$\{V(n)\}_{n\in\mathbb{Z}}$} is given by random variables that are not identically distributed, how can the spectrum be characterized?
\\
{\bf Question:}
\edited{\it Suppose that $m\in\mathbb{N},$ and $\nu_0, \nu_2,\cdots\;\nu_{m-1} $ are probability distributions  on $\mathbb{R}$ with finite support.  Suppose that a random potential $\{V(n)\}$ is given by a sequence of independent random variables, such that $V(n)$ is distributed with respect to $\nu_n\;(mod\;m)$.  Does the spectrum $\sigma(H_V)$ almost surely consist of a finite number of intervals?}

\edited{To the best} of my knowledge, this problem is open.  This paper addresses one specific case, when $m=2$.

Let us denote by $B(p)$ a Bernoulli random variable that takes value 1 with probability \edited{$p\in(0,1)$}, and value 0 with probability $1-p$.

\begin{theorem}\label{thm:1}
For any real $\lambda_0,\lambda_1,c_0,c_1,$ a discrete Schr\"{o}dinger operator with random potential, $V(n),$  defined by the sequence of distributions:
\begin{equation}\label{eq:2}\nu(n)=\begin{cases} \lambda_0 B(p_0)+c_0 & n\text{ is even} \\ \lambda_1 B(p_1)+c_1 & n\text{ is odd}\end{cases}\end{equation}\\
the almost sure spectrum $\sigma_{as}(H_{\edited{V}})$ consists of at most 4 closed intervals
\end{theorem}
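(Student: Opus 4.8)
The plan is to recast the spectral statement as a question about the uniform hyperbolicity of the $SL(2,\mathbb{R})$ transfer-matrix cocycle and then analyze that cocycle by hand. By the classical dictionary between the almost sure spectrum of an ergodic Schr\"odinger operator and uniform hyperbolicity of its transfer cocycle (Johnson's theorem and its refinements), an energy $E$ lies outside $\sigma_{as}(H_V)$ exactly when the cocycle is uniformly hyperbolic at $E$. Since the distributions are $2$-periodic, the first step is to pass to the blocked cocycle: write $\{x_0,x_1\}=\{c_0,c_0+\lambda_0\}$ and $\{y_0,y_1\}=\{c_1,c_1+\lambda_1\}$ for the possible values at even and odd sites, set $A(E,v)=\left(\begin{smallmatrix}E-v&-1\\1&0\end{smallmatrix}\right)$, and consider the locally constant cocycle over the full shift on the four symbols $(i,j)\in\{0,1\}^2$ with generators $M_{ij}(E)=A(E,y_j)A(E,x_i)$; uniform hyperbolicity of this cocycle is equivalent to that of the original one. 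A direct computation gives $\det M_{ij}=1$ and $\operatorname{tr}M_{ij}(E)=(E-x_i)(E-y_j)-2$, so $M_{ij}(E)$ is hyperbolic iff $(E-x_i)(E-y_j)\notin[0,4]$, and in fact $\{E:(E-x_i)(E-y_j)\in[0,4]\}$ is precisely the (at most two-band) spectrum of the periodic operator with potential $(x_i,y_j,x_i,y_j,\dots)$.

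Set $\mathcal{E}=\bigcup_{i,j}\{E:(E-x_i)(E-y_j)\in[0,4]\}$. The inclusion $\sigma_{as}(H_V)\supseteq\mathcal{E}$ is immediate: each constant word $\overline{(i,j)}$ is admissible, so uniform hyperbolicity forces every $M_{ij}(E)$ to be hyperbolic. The core of the proof is the reverse inclusion, i.e.\ that if all four $M_{ij}(E)$ are hyperbolic then the cocycle is uniformly hyperbolic at $E$. For this I would build a common invariant multicone in projective space: each hyperbolic $M_{ij}(E)$ has an attracting direction $u_{ij}(E)$ and a repelling direction $s_{ij}(E)$, these being explicit algebraic functions of $E$; for $|E|$ large all $u_{ij}$ cluster near one point of $\mathbb{RP}^1$ and all $s_{ij}$ near another, so a single invariant cone exists there. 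One then needs to see that, on each connected component of $\mathbb{R}\setminus\mathcal{E}$, no ``resonance'' $u_{ij}(E)=s_{kl}(E)$ occurs; granting this, a sufficiently small finite union of arcs around the points $u_{ij}(E)$ — small enough to miss every $s_{kl}(E)$, and using that on $\{d(E,\mathcal{E})\ge\delta\}$ each $M_{ij}$ has eigenvalue ratio bounded away from $1$ and is hence strongly contracting on projective space — is mapped strictly inside itself by each $M_{kl}$, which gives uniform hyperbolicity; exhausting $\mathbb{R}\setminus\mathcal{E}$ by the sets $\{d(E,\mathcal{E})\ge\delta\}$ then yields $\mathbb{R}\setminus\mathcal{E}\subseteq\rho(H_V)$. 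Conversely a resonance would produce a forward/backward orbit sharing a direction, so it genuinely obstructs uniform hyperbolicity; thus the no-resonance claim is exactly what is needed, and it gives $\sigma_{as}(H_V)=\mathcal{E}$.

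I expect the no-resonance claim to be the main obstacle. I would attack it by a finite case analysis on the ordering of $x_0,x_1,y_0,y_1$ together with the explicit rational formulas for $u_{ij}$ and $s_{ij}$: an equality $u_{ij}(E)=s_{kl}(E)$ forces $E$ to be a common root of two polynomials assembled from the entries of $M_{ij}(E)$ and $M_{kl}(E)$, and a resultant (or trace-identity) computation should show every such root already satisfies $(E-x_i)(E-y_j)\in[0,4]$ or $(E-x_k)(E-y_l)\in[0,4]$, hence lies in $\mathcal{E}$. Monotonicity of the directions $u_{ij},s_{ij}$ in $E$ (the Pr\"ufer-angle monotonicity underlying the rotation number) helps organize this, since it constrains how the relative cyclic order of the eight points on $\mathbb{RP}^1$ can change. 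The degenerate cases $\lambda_0=0$ or $\lambda_1=0$ and coincidences among $x_0,x_1,y_0,y_1$ are then handled as easy specializations.

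Finally one counts the components of $\mathcal{E}$. For fixed $(i,j)$ the set $\{E:(E-x_i)(E-y_j)\in[0,4]\}$ is a single closed interval when $x_i=y_j$ and otherwise the union of two closed intervals lying on the two sides of the segment between $x_i$ and $y_j$, with inner endpoints $x_i$ and $y_j$ and outer endpoints $\tfrac{x_i+y_j}{2}\pm\sqrt{4+\tfrac14(x_i-y_j)^2}$. Since all four inner endpoints belong to $\{x_0,x_1,y_0,y_1\}$, comparing these quadratics — again by cases on the order of $x_0,x_1,y_0,y_1$ — shows that the at most eight pieces of $\mathcal{E}$ always merge so as to leave at most three bounded gaps. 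Hence $\mathcal{E}$, and therefore $\sigma_{as}(H_V)$, is a union of at most four closed intervals, proving the theorem.
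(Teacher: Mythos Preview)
Your overall strategy matches the paper's: reduce via Johnson's theorem to uniform hyperbolicity of the blocked cocycle, pass to the four matrices $M_{ij}(E)$, and aim to prove that $\{M_{ij}(E)\}$ is uniformly hyperbolic exactly when each $M_{ij}(E)$ is individually hyperbolic. The paper states this equivalence as its key Lemma~2, and your identification $\sigma_{as}=\mathcal E$ together with the final counting of components is the same endgame.

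The gap is in your argument that ``all four hyperbolic $\Rightarrow$ uniformly hyperbolic.'' You propose to take small arcs around the four attracting directions $u_{ij}$ and claim the union is an invariant multicone as soon as no $u_{ij}$ equals any $s_{kl}$. This does not work: $M_{kl}$ sends a small arc around $u_{ij}$ (for $(i,j)\neq(k,l)$) to a small arc centred at $M_{kl}(u_{ij})$, and there is no reason for $M_{kl}(u_{ij})$ to lie near any of the four points $u_{mn}$. ``Eigenvalue ratio bounded away from $1$'' on $\{d(E,\mathcal E)\ge\delta\}$ gives a fixed projective contraction rate, not an arbitrarily strong one, so you cannot force the image into a small neighbourhood of $u_{kl}$. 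More conceptually, the Avila--Bochi--Yoccoz description of $\partial\mathcal H$ (quoted in the paper as Theorem~4) shows that hyperbolicity can fail not only when $u_{ij}=s_{kl}$ for two generators, but whenever some \emph{product} of generators carries an unstable direction onto a stable one, or some product is parabolic; your ``no resonance'' test rules out only the simplest of these obstructions. So even a successful resultant computation showing $u_{ij}(E)\neq s_{kl}(E)$ off $\mathcal E$ would not finish the argument.

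What the paper actually proves is stronger and is exactly what your multicone attempt is missing: on every component of $\mathbb R\setminus\mathcal E$ there is a \emph{principal} cone, i.e.\ a single arc of $\mathbb{RP}^1$ containing all four $u_{ij}$ and none of the $s_{kl}$. This is obtained by an explicit case analysis on the ordering of the potential values (the three cases for $\{\lambda_0,c_1,c_1+\lambda_1\}$, each split into five $E$-subintervals), using two ingredients: a lemma that two of the $M_{ij}$ sharing a transfer-matrix factor can never share an eigenvector when hyperbolic, and a perturbative comparison of eigenvectors via the signs of $\partial u/\partial\lambda_k$ and $\partial s/\partial\lambda_k$ at $\lambda_k=0$. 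These give, in each case, a total ordering of the eight points $u_i,s_j$ on the projective line with all $u$'s on one side and all $s$'s on the other, from which the principal cone is read off directly. Your sketch becomes correct if you replace ``$u_{ij}\neq s_{kl}$'' by this separation of $\{u_{ij}\}$ from $\{s_{kl}\}$; establishing that separation is the actual work, and it is the heart of the paper's proof.
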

\edited{Notice that if $\lambda_0=0$ or $\lambda_1=0$, then the potential has a constant element.  In section \hyperref[sec:4E]{$4E$}} we provide an explicit formula for the spectrum in terms of the parameters $\lambda_0,\lambda_1,c_0,\;\&\;c_1$.

After this paper was written, I became aware of the preprint \cite{d5} "Spectral Characteristics of Schr\"odinger operators generated by product systems" written by David Damanik, Jake Fillman, and Philipp Gohlke who independently proved the same  result. We agreed to post our texts to ArXiv at the same time. 
\section{\label{sec:2}Preliminaries}
Here, we are going to examine the uniform hyperbolicity of the cocycle generated by transfer matrices.  Specifically, we will look at the transfer matrices of the Schr\"{o}dinger operator with potential given by \hyperref[eq:2]{$(2)$}.  The concept of uniform hyperbolicity is presented in subsection \hyperref[sec:2A]{2A}.  Uniform hyperbolicity has similar, yet distinct, definitions when applied to cocyles and sets of matrices.  In the context of this paper, the specific cocycle $(T,A_E)$ generated by the transfer matrices is found to be uniformly hyperbolic if and only if a related set of matrices is uniformly hyperbolic.

In subsection \hyperref[sec:2B]{2B}, Johnson's Theorem and transfer matrices are introduced.  \hyperref[thm:3]{Johnson's Theorem}\edited{\cite{z,jo}} states that an energy $E$ belongs \edited{ to the almost sure spectrum of the ergodic family of operators $\{H_V\}$, $E\in\sigma_{as}(H_V)$}, if and only if the relevant cocycle $(T,A_E)$ (\edited{generated by} the transfer matrices) is \edited{not} uniformly hyperbolic.  For a brief history of Johnson's Theorem, and how it originated in a paper by Russell Johnson in 1986, see \cite{z2} and \edited{\cite{jo}}.  Depending on how well the potential $\edited{V}(n)$ is understood, the respective cocycle can be analyzed. In section \hyperref[sec:2C]{\edited{$2C$}}, speculations about the broader case mentioned in the above question will be made \edited{and shown to meet Johnson's Theorem's conditions.  Lemma \hyperref[lem:1]{1} is also introduced and proven, which states the uniform hyperbolicity of the respective cocycle is equivalent to the uniform hyperoblicity of a related set of matrices.}   \edited{Last, in section \hyperref[sec:2D]{$2D$}, we hypothesize about the question from section \hyperref[sec:1]{$1$}.}

In section \hyperref[sec:3]{3}, lemma \hyperref[lem:2]{2}, which is key to proving Theorem \hyperref[thm:1]{1}, is outlined, \edited{ The poof of the lemma, however, is provided in section \hyperref[sec:4D]{$4D$}.  Lemma \hyperref[lem:2]{$2$} states an equivalence exists between the  relevant set of matrices being uniformly hyperbolic and a condition that is much easier to calculate.}  

\edited{ In section \hyperref[sec:4]{4}, we go into proving lemma 2.  The first three subsections are dedicated to providing necessary lemmas.  The second to last subsection goes into proving lemma \hyperref[lem:2]{$2$}, and the last subsection explicitly defines the interval the complement of the spectrum. }

 \subsection{\label{sec:2A}Uniform Hyperbolicity}

First, we will need to define what it means that a cocycle or a set of matrices is uniformly hyperbolic\edited{ \cite{aby,y,ca}}.  \edited{We will begin by talking about sets of matrices.}
\begin{defn}
\label{def:1}A set of matrices $\{A_n\}\subset SL(2,\mathbb{R})$ is \emph{uniformly hyperbolic} if there exists the constant $\lambda>1$ such that, for any product $\prod A_i$ of $m$ matrices, the inequality $||\prod A_i||>\lambda^m$ holds.
\end{defn}  \edited{In this product, duplicates are allowed.  Due to Avila, Bochi, and Yoccoz} \cite{aby}, we have the following Theorem. \begin{theorem}\label{thm:2}
A \edited{finite} set of $2\times 2$ matrices $\{A_i\}$ is uniformly hyperbolic if and only if there is a nontrivial open set \edited{$C\subsetneq\mathbb{P}^1$ (hereafter referred to as a cone) such that $\overline{C}\subsetneq\mathbb{P}^1$} is a finite number of intervals and $A_i(\overline{C})\subset C$ for each matrix $A_i$ from the set.  
\end{theorem}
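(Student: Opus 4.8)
This is the finite-alphabet multicone criterion of Avila, Bochi, and Yoccoz, and I would prove the two implications separately, working with the action of $SL(2,\mathbb{R})$ on $\mathbb{P}^1\cong S^1$ and writing $\widehat A$ for the circle homeomorphism induced by a matrix $A$.

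\emph{Sufficiency.} Suppose a cone $C$ as in the statement exists. Since $A_i(\overline C)\subset C\subset\overline C$, the open set $C$ is forward invariant under every $A_i$, hence $P(\overline C)\subset C$ for every product $P=A_{i_1}\cdots A_{i_m}$. As $\overline C$ is a proper finite union of closed arcs carried strictly inside itself, $\widehat P$ has an attracting fixed point $u_P\in C$, so $P$ is hyperbolic with $\|P\|\ge|\mu_P|>1$. For the exponential rate I would equip each component of $C$ with its Hilbert (cross-ratio) metric: each $A_i$ sends a component of $\overline C$ into a compactly contained subarc of a component of $C$, so by Birkhoff's contraction lemma it contracts these metrics by a factor $\le\tau<1$, uniform since the matrices and the components are finite in number. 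Then $\widehat P$, restricted to the component containing $u_P$ (which it maps into itself), contracts by $\le\tau^m$; as the derivative of $\widehat P$ at the interior fixed point $u_P$ equals $\mu_P^{-2}$ in any smooth chart, we get $|\mu_P|\ge\tau^{-m/2}$, and $\lambda:=\tau^{-1/2}>1$ works. This direction is classical; the work is just this bookkeeping.

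\emph{Necessity.} Assume $\|A_{i_1}\cdots A_{i_m}\|>\lambda^m$ for all products; applying this to $A_i^m$ first forces each $A_i$ to be hyperbolic with $|\mu_i|\ge\lambda$. Pass to the full two-sided shift $\Omega=\{1,\dots,k\}^{\mathbb{Z}}$, shift $\sigma$, and the locally constant cocycle $\omega\mapsto A_{\omega_0}$. The plan is to extract from the uniform norm growth a hyperbolic structure over $\Omega$: a continuous section $E^u\colon\Omega\to\mathbb{P}^1$ depending only on the past of $\omega$, with $\widehat A_{\omega_0}E^u(\omega)=E^u(\sigma\omega)$ and uniform exponential expansion along it (concretely $E^u(\omega)=\lim_n\widehat{A_{\omega_{-1}}\cdots A_{\omega_{-n}}}(v)$, independent of $v$); a transverse stable section $E^s$ with $\{E^u(\omega)\}$ and $\{E^s(\omega)\}$ uniformly separated; and --- this is the point --- a continuous conefield $\omega\mapsto\mathcal C(\omega)$ depending only on the past, with $\mathcal C(\omega)$ an open arc that contains a fixed-radius arc around $E^u(\omega)$, is contained in a larger fixed-radius arc around $E^u(\omega)$, is disjoint from the $E^s$'s, and satisfies $\widehat A_{\omega_0}(\overline{\mathcal C(\omega)})\subset\mathcal C(\sigma\omega)$. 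The last property uses the uniform expansion and is most robustly built via an adapted (``Lyapunov'') metric that corrects for the fact that a single $A_i$ need not expand $E^u$.

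\emph{Assembling the cone.} Put $C:=\bigcup_{\omega\in\Omega}\mathcal C(\omega)$. Because $\mathcal C(\omega)$ depends only on the past, changing the coordinate $\omega_0$ of $\omega$ to any symbol $i$ leaves $\mathcal C(\omega)$ unchanged, so $\widehat A_i(\overline{\mathcal C(\omega)})=\widehat A_i(\overline{\mathcal C(\omega')})\subset\mathcal C(\sigma\omega')\subset C$ for the modified sequence $\omega'$; since $\overline C=\bigcup_\omega\overline{\mathcal C(\omega)}$ (upper semicontinuity of the conefield), taking the union gives $\widehat A_i(\overline C)\subset C$ for every $i$. Finally $C$ is a proper finite union of arcs: with $K:=\{E^u(\omega):\omega\in\Omega\}$ (compact) one has $N_{r_1}(K)\subseteq C\subseteq N_{r_2}(K)$, the outer neighbourhood still disjoint from the $E^s$'s, and $N_{r_1}(K)$ has finitely many components, each contained in a component of $C$, so $C$ has finitely many components and $\overline C\subsetneq\mathbb{P}^1$. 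The one genuinely substantial step is producing the hyperbolic structure --- a continuous, uniformly transverse, uniformly expanded/contracted splitting, equivalently an invariant conefield, over the full shift --- from the bare norm-growth hypothesis; in the $SL(2,\mathbb{R})$ case this is carried out by a contraction / graph-transform argument for the projective dynamics together with compactness of $\Omega$, after which the remainder is bookkeeping.
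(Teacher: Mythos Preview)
The paper does not give its own proof of this theorem: it is stated as a quotation from Avila--Bochi--Yoccoz \cite{aby} (``Due to Avila, Bochi, and Yoccoz, we have the following Theorem'') and is used as a black box. So there is nothing in the paper to compare your argument against.

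That said, your outline is a faithful sketch of the ABY multicone criterion. The sufficiency direction via Birkhoff/Hilbert contraction is standard and your bookkeeping is correct, including the observation that the projective derivative of $\widehat P$ at its attracting fixed point equals $\mu_P^{-2}$, which converts the contraction rate $\tau^m$ into the eigenvalue bound $|\mu_P|\ge\tau^{-m/2}$. For necessity, your strategy---pass to the full shift, build the past-measurable unstable section $E^u$ and a past-measurable strictly invariant conefield $\mathcal C(\omega)$, then take $C=\bigcup_\omega\mathcal C(\omega)$ and use past-measurability to get $A_i(\overline C)\subset C$ for every symbol $i$---is exactly the ABY mechanism. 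Your finiteness argument for the components of $C$ is also fine: each component of $C$ contains some $\mathcal C(\omega)$, hence contains the $r_1$-ball around some $E^u(\omega)$, hence contains a full component of $N_{r_1}(K)$; since $N_{r_1}(K)$ has finitely many components (each has length $\ge 2r_1$), so does $C$.

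The only caveat is the one you already flag: the existence of the continuous, uniformly transverse splitting $E^u\oplus E^s$ over the full shift, together with a strictly invariant adapted conefield, is the substantive analytic input, and you have described \emph{what} is needed rather than carried it out. As a proof this is a sketch; as a roadmap for ABY it is accurate.
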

For this paper, the projective space, $\mathbb{P},$ will be parameterized by $\begin{bmatrix}x \\ y\end{bmatrix} \leftrightarrow \frac{x}{y}$.  The simplest example of a uniformly hyperbolic set of matrices is when $C$ is a principal cone. \begin{defn} If the cone $C$ consists of a single interval, it is called a \emph{principal cone}.  \end{defn} If the set $\{A_i\}$ is a set of $2\times 2$ matrices, a principal cone forms if and only if all the unstable eigenvectors (here the unstable eigenvector of $A_i$ will be expressed as \edited{ $u_i$}) lie in a single interval in the projective space that contains none of the stable eigenvectors \edited{(here the stable eigenvector of $A_i$ will be expressed as $s_i$).  The cone, $C$, can be defined as that interval.}  

Before covering the second definition of uniform hyperbolicity, we need to define a cocyle. \begin{defn} Given a \edited{metric} space $\Omega$, \edited{homeomorphism} $T:\Omega\rightarrow \Omega$ and \edited{continuous} $B:\Omega\rightarrow SL(2,\mathbb{R})$, a \emph{cocycle} is the  map\[(T,B):(\Omega,\mathbb{R}^2)\rightarrow(\Omega,\mathbb{R}^2)\]  \[(\omega,\vec{v})\mapsto(T(\omega),B(\omega)\vec{v}).\] \end{defn}  \begin{defn}\label{def:4}

Denoting $(T,B)^n=(T^n,B_n)$ where \[B_n(\omega)=B(T^{n-1}(\omega))\cdot B(T^{n-2}(\omega))\cdots B(T(\omega))B(\omega).\] A cocycle $(T,B)$ is \edited{\emph{uniformly hyperbolic}} if there exist $\lambda>1$ and $C>0$ such that $||B_n(\omega)||> C \lambda^n$ for all $n\in\mathbb{Z}^+$ and $\omega\in\Omega$. 

\end{defn} \edited{More general definitions of hyperbolicity are available\cite{y}, but we do not use them here.}

\subsection{\label{sec:2B}Johnson's Theorem and Transfer Matrices}

Given a dynamical system $(\Omega,T,\mu)$  \edited{ with compact metric} space $\Omega$, probability measure $\mu$, ergodic $T$, and a \edited{continuous} function $f:\Omega\rightarrow\mathbb{R}$, an ergodic potential \edited{$V$} of a Schr\"{o}dinger operator is defined by the equation \begin{equation}\label{eq:3} \edited{V}_\omega(n)=f(T^n\omega)\end{equation} for some $\omega\in\Omega$.  To provide a simple example, if $\Omega$ is a finite set and $T$ is a bijection, then $\edited{V}_\omega(n)$ would be a  periodic potential.  Any potential with period m can be defined as such with $\Omega=\mathbb{Z}_m$. 

In order to study \edited{operators with} an ergodic potential, it would be useful to define a transfer matrix.  First, \edited{if, the continuous functions} $f:\Omega\rightarrow\mathbb{R}$ and $g:\mathbb{R}\rightarrow SL(2,\mathbb{R})$ are given, then $(T,g\circ f(\cdot))$ would be a $SL(2,\mathbb{R})-$cocycle.  Here $\edited{(T,g\circ f)}:(\Omega,\edited{\mathbb{R}^2})\rightarrow(\Omega,\edited{\mathbb{R}^2})$ \edited{acts by} $(\omega,\vec{v})\mapsto(T\omega,g\circ f(\omega)\cdot \vec{v})$.  Next, given some \edited{Schr\"odinger} operator $H$ with ergodic potential $\edited{V_\omega}$, assuming there is some $E$ such that $E\phi(n)=H_\omega\phi(n)$ \edited{ and} taking into account equation \hyperlink{eq:1}{$(1)$}, we have
\[\begin{bmatrix} \phi(n+1) \\ \phi(n)  \end{bmatrix}=\begin{bmatrix} E-\edited{V_\omega}(n) & -1 \\ 1 & 0 \end{bmatrix}\cdot \begin{bmatrix}\phi(n) \\ \phi(n-1)\end{bmatrix}\]   We arrive at the transfer matrix 
\begin{equation}\label{eq:4}A_E(n)=g_E\circ \edited{V_\omega}(n)=\begin{bmatrix} E-\edited{V_\omega}(n) & -1 \\ 1 & 0 \end{bmatrix}\end{equation} \edited{where $g_E:\mathbb{R}\rightarrow SL(2,\mathbb{R})$ is given by \[g_E(x)=\begin{bmatrix}E-x & -1\\1 &0\end{bmatrix},\]and $\edited{V_\omega}(n)$ is the potential at the $n^{th}$ place.}
Transfer matrices \edited{are $SL(2,\mathbb{R})$ matrices} and satisfy the property
\[\begin{bmatrix} \phi(m+1) \\ \phi(m)\end{bmatrix} = A_E(m)\cdot A_E(m-1)\cdots A_E(n)\begin{bmatrix} \phi(n)\\ \phi(n-1)\end{bmatrix}\]\edited{assuming $E\phi=H_\omega\phi$.}  Define \edited{\[ \footnotesize A^m(n,E)=\begin{cases} A_E(m+n-1)\cdot A_E(m+n-2)\cdots A_E(n) & \text{ if } m>0  \\ I & \text{ if }m=0 \\ A_E(n-m)^{-1}\cdots A_E(n-2)^{-1}\cdot A_E(n-1)^{-1} & \text{ if } m<0 \end{cases} \]  Denote $A^m_E=A_E(m-1)\cdot A_E(m-2)\cdots A_E(1)\cdot A_E(0)$.  }

In order to study the spectrum of the Schr\"{o}dinger operator, we will use the notion of uniform hyperbolicity in definition 4 applied to  Schr\"{o}dinger cocycles.

We have Johnson's Theorem\cite{z,jo}:

\begin{theorem}[Johnson's Theorem]\label{thm:3}

If $(\Omega,T,\mu)$ is an ergodic dynamical system, $\Omega$ is  a compact metric space, $T$ is a \edited{homeomorphism}, $\mu$ is a $T$-invariant measure with the support of the measure being $\Omega$ and $f:\Omega\rightarrow\mathbb{R}$ is continuous, then the almost sure spectrum $\sigma_{as}(H)$ of the operator \hyperref[eq:1]{(1)} with an ergodic potential defined by this dynamical system is given by  \[\sigma_{as}(H)=\{E:(T,A_E) \text{ is not uniformly hyperbolic}\}\]
\end{theorem}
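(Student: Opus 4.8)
Since $V_{T\omega}(n)=f(T^{n+1}\omega)=V_\omega(n+1)$, the operator $H_{T\omega}$ is the conjugate of $H_\omega$ by the shift on $\ell^2(\mathbb{Z})$, so $\omega\mapsto\sigma(H_\omega)$ is $T$-invariant and therefore $\mu$-almost surely equal to a fixed compact set $\Sigma=\sigma_{as}(H)$; this is the standard reduction, and what remains is to identify $\Sigma$ with $\{E:(T,A_E)\text{ is not uniformly hyperbolic}\}$. Throughout I would use the equivalent description of uniform hyperbolicity for $SL(2,\mathbb{R})$-cocycles (\cite{aby,y}): $(T,A_E)$ is uniformly hyperbolic if and only if it admits an \emph{exponential dichotomy} over $(\Omega,T)$, i.e.\ a continuous $T$-invariant splitting $\mathbb{R}^2=W^u(\omega)\oplus W^s(\omega)$ together with $C>0$, $\lambda>1$ such that $\|A_{E,n}(\omega)v\|\le C\lambda^{-n}\|v\|$ for $v\in W^s(\omega)$ and $\|A_{E,n}(\omega)v\|\ge C^{-1}\lambda^{n}\|v\|$ for $v\in W^u(\omega)$, valid for all $n\ge 0$ and all $\omega$.

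I would first show that uniform hyperbolicity of $(T,A_E)$ implies $E\notin\Sigma$. Fix $\omega$ and let $\phi^s$, $\phi^u$ be the solutions of $H_\omega\phi=E\phi$ whose initial vectors $(\phi(0),\phi(-1))^{\top}$ span $W^s(\omega)$, $W^u(\omega)$ respectively. The dichotomy makes $\phi^s$ decay exponentially at $+\infty$ and $\phi^u$ at $-\infty$, and $W^s(\omega)\cap W^u(\omega)=\{0\}$ makes their Wronskian nonzero; the Wronskian-normalised product of $\phi^s$ and $\phi^u$ is then the Green's kernel of $H_\omega-E$, which decays exponentially in $|m-n|$ and hence (Schur test) defines a bounded operator inverting $H_\omega-E$ on $\ell^2(\mathbb{Z})$. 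So $E\in\rho(H_\omega)$ for \emph{every} $\omega$, whence $E\notin\Sigma$.

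For the converse, suppose $E\notin\Sigma$, so $d:=\operatorname{dist}(E,\Sigma)>0$ and $(H_\omega-E)^{-1}$ is bounded for $\mu$-a.e.\ $\omega$. The Combes--Thomas estimate gives a kernel bound $|G_\omega(m,n;E)|\le (C/d)e^{-cd|m-n|}$ with absolute $C,c$ (the hopping amplitude is $1$), and the two half-line boundary values of $G_\omega$ produce, for a.e.\ $\omega$, two transversal solution directions of $H_\omega\phi=E\phi$ decaying at $+\infty$ and at $-\infty$ with rate $\sim d$; that is an exponential dichotomy over the orbit of $\omega$, with constants uniform in $\omega$. It then remains to upgrade this from a.e.\ $\omega$ to every $\omega$. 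Writing the invariant directions as limits $W^s(\omega)=\lim_n A_{E,n}(\omega)^{-1}\ell$ and $W^u(\omega)=\lim_n A_{E,-n}(\omega)^{-1}\ell$ of a fixed line $\ell$, the uniform rate makes these limits and the dichotomy bounds stable under $\omega_k\to\omega$ with $\omega_k$ in the (dense, since $\operatorname{supp}\mu=\Omega$) full-measure set; hence the dichotomy holds over all of $\Omega$, i.e.\ $(T,A_E)$ is uniformly hyperbolic. Equivalently one can route this last step through Sacker--Sell theory for cocycles over a compact base, where the a.e.\ dichotomy plus $\operatorname{supp}\mu=\Omega$ forces $0$ into a resolvent gap of the Sacker--Sell spectrum.

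The delicate point -- and the place where the hypotheses that $\Omega$ is compact and $\operatorname{supp}\mu=\Omega$ are genuinely needed -- is this last upgrade in the converse direction. Combes--Thomas already hands over a decay \emph{rate} depending only on $d$, so the real work is to get the decay \emph{constant} and the convergence of the sections $W^{s/u}(\omega)$ to be locally uniform in $\omega$ (a compactness/continuity argument on the cocycle), after which extending the set of good $\omega$ to all of $\Omega$ by density is routine. The forward implication, by comparison, is a bookkeeping computation with the dichotomy once its equivalence with uniform hyperbolicity is granted.
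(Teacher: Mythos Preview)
The paper does not prove Theorem~\ref{thm:3}; it is quoted as a known result and attributed to the references \cite{z,jo} (and discussed further in \cite{z2}). There is therefore no ``paper's own proof'' to compare against. Your sketch is a faithful outline of the standard argument in those sources: the forward implication via the exponential dichotomy and the explicit Green's kernel built from the decaying solutions, and the converse via Combes--Thomas decay of the resolvent kernel followed by an upgrade from $\mu$-a.e.\ $\omega$ to all $\omega$ using compactness of $\Omega$ and $\operatorname{supp}\mu=\Omega$.

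One point worth tightening in the converse direction: from $E\notin\Sigma$ you only get $(H_\omega-E)^{-1}$ bounded for $\mu$-a.e.\ $\omega$, and Combes--Thomas by itself gives you the decay \emph{rate} in terms of $\operatorname{dist}(E,\sigma(H_\omega))$, not in terms of $d=\operatorname{dist}(E,\Sigma)$. To make the constants genuinely uniform in $\omega$ you need to argue that $\operatorname{dist}(E,\sigma(H_\omega))\ge d$ on a full-measure set (which is immediate once $\sigma(H_\omega)=\Sigma$ a.e.), and then handle the exceptional $\omega$ by the density/continuity step you describe. This is exactly the place where the hypothesis $\operatorname{supp}\mu=\Omega$ enters, as you note; Zhang's write-up \cite{z,z2} makes this closure argument explicit, and your description of it via limits of $A_{E,n}(\omega)^{-1}\ell$ is the right mechanism.
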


\subsection{\label{sec:2C}Hyperbolicity Locus and Anderson-Bernoulli Model}
  The paper's main result, Theorem \hyperref[thm:1]{1}, applies to a specific random potential \edited{ \hyperref[eq:2]{$(2)$}}.  This opens the question \edited{about a more general class of potentials.  Specifically, how would we characterize the spectrum if the potential} is defined by a  periodic sequence of more than two distributions \edited{ as defined in the question in section \hyperref[sec:1]{$1$}}.  The model would become more complicated and some of the tools in this paper will not apply.  \edited{In fact, I would conjecture} that with large enough period, a result similar to Theorem \hyperref[thm:1]{$1$} would not hold. 
  
  Denote a random potential as being defined by \edited{$m$ distributions like:} \begin{equation}\label{eq:5}\nu(n)=\begin{cases}\lambda_0B(p_0)+c_0 & n\equiv 0 \mod m \\ \lambda_1B(p_1)+c_1 & n\equiv 1\mod m \\ \hspace{2em}\vdots & \hspace{1em} \vdots \\ \lambda_{m-1}B(p_{m-1})+c_{m-1} & n\equiv m-1 \mod m \end{cases}\end{equation}
  Here, $\lambda_0,\lambda_1,\cdots,\lambda_{m-1}$ and $c_0,c_1,\cdots,c_{m-1}$ are real numbers, $m$ is fixed, and the Bernoulli distribution $B$ takes on the values $0$ or $1$ with probability $0<p_i<1$ $(i=0,1,\cdots,m-1)$.  \edited{Denote $X=\{0,1\}^\mathbb{Z},$  $\Omega=\mathbb{Z}_m\times X,$} and define the cocycle \[\edited{(T,A_E):\Omega\times\mathbb{R}^2\rightarrow \Omega\times\mathbb{R}^2.}\] \edited{  Assuming $\omega=(k,x)\in\Omega,$ then we can define $k\in \mathbb{Z}_m$ and $x\in X$, and furthermore, we can be define $x$ as a sequence $(\cdots x_{n-1},x_n,x_{n+1},\cdots)$ such that $x_i$ is $0$ or $1$.  Defining $T_1:X\rightarrow X$ to be the shift operator such that $(T_1x)_n=x_{n+1}$, we can define} \begin{equation}\label{eq:6}\begin{aligned} T:(k,x)\mapsto(k+1 \;mod\; m,\;T_1(x))\\ (T,A_E)(\omega,\vec{v})=(T(\omega),A_{E,\omega}\vec{v}).\end{aligned}\end{equation}   \edited{Let $A_{E,\omega}$ be a transfer matrix from \hyperref[eq:4]{\edited{$(4)$}} corresponding to potential $V_{\omega}(k)$ from \hyperref[eq:3]{\edited{$(3)$}}.  We \edited{will denote} the potential defined by $\nu$ from \hyperref[eq:5]{\edited{$(5)$}} as $V_{(0,x)}$ and show it is dynamically defined and corresponds to an ergodic family of operators.  We have:} \[\begin{array}{c}  \edited{f((k,x))=\lambda_k\cdot x_0+c_k}\\ \\  V_{(0,x)}(0)=f(0,x)=\edited{\lambda_0\cdot x_0+c_0}\\ \\ V_{(0,x)}(n)=f(T^n(0,x))=(\lambda_{n\;mod\; m} )\cdot x_n+(c_{n\;mod\; m})\\ \\ A_{E,(k,x)}=\begin{bmatrix} E-\edited{V_{(0,x)}(k)} & -1 \\ 1 & 0\end{bmatrix}\\ \\ \edited{(T,A_E):\big((k,x) , \vec{v} \big)}\mapsto \big( (k+1\;mod\;m,T_1x)  ,  A_{E,(k,x)}\cdot \vec{v}  \big) \end{array}\]\edited{Notice that we only need to consider $V_\omega$ with $\omega=(0,x)$, as $V_{k,x}$ would involve shifting the distributions of $\nu$ by $k$ positions.}

  \edited{ We can use the counting, probability measure $\mu_0$ on $\mathbb{Z}_m$.  Using probabilities $\{p_i\}_{i=1}^m$ from  \hyperref[eq:5]{$(5)$}, probability measure $\mu_1$  over $X$ can be defined by \[\mu_1(\{x:x_n=1\})=p_{n\mod m},\] and finally $\mu$ over $\Omega$ can be the product of probability measures $\mu_0\times\mu_1$.  The systems $(\Omega_0,T_0)$ and $(\Omega_1,T_1)$ are ergodic, and due to $T_1$ being mixing, $(\Omega, T)$ is ergodic.  Additionally, $\Omega$ is compact and $T-$invariant.  The cocycle $(T,A_E),$ is thus a Schr\"{o}dinger cocycle of an ergodic family of operators.  Because $(\Omega,T)$ is ergodic and satisfies the conditions of Theorem 3 (\hyperref[thm:3]{Johnson's Theorem}), the almost sure spectrum is equal to the set} \[\edited{\sigma_{as}(H_V)=\{E:(T,A_E)\text{ is not uniformly hyperbolic}\}}.\]   \edited{The following lemma can be used to better analysis when the respective cocycle is uniformly hyperbolic.}

  \begin{lemma}\label{lem:1} 
Consider an ergodic family of Schr\"{o}dinger operators with random potentials expressible as a possible outcome of the distribution \hyperref[eq:5]{\edited{$(5)$}}. The $SL(2,\mathbb{R})-$cocycle $(T,A_E)$ is uniformly hyperbolic if and only if the set of distinct matrices \[\{M:M=A^m_{E,(0,x)}\text{ for some }(0,x)\in\edited{\Omega}\}\] is uniformly hyperbolic \edited{for any $m\in\mathbb{Z}^+$}.
\end{lemma}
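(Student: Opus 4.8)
The plan is to exploit the product structure $\Omega=\mathbb{Z}_m\times X$ with $X=\{0,1\}^{\mathbb{Z}}$ the full shift, so that the cocycle over the continuum $\Omega$ can be replaced by a genuinely finite family of matrices, after which Theorem \ref{thm:2} becomes available. First I would record the elementary observation that, since the potential $V_{(0,x)}(j)=\lambda_{j\bmod m}x_j+c_{j\bmod m}$ for $0\le j<m$ only sees the coordinates $x_0,\dots,x_{m-1}\in\{0,1\}$, the matrix $A^m_{E,(0,x)}=A_E(m-1)\cdots A_E(0)$ depends on $x$ only through this block; hence the set in the statement, call it $S$, is finite with $\#S\le 2^m$.

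The combinatorial core is the identity
\[
\{\ \text{finite products of elements of }S\ \}=\{\ A^{km}_{E,(0,x)}:\ k\ge 1,\ x\in X\ \}.
\]
For $\supseteq$, split the $km$ factors of $A^{km}_{E,(0,x)}$ into $k$ consecutive blocks of $m$ factors; because $j\mapsto j\bmod m$ and the potential (5) both have period $m$, the block occupying positions $(i-1)m,\dots,im-1$ equals $A^m_{E,(0,y^{(i)})}$ with $y^{(i)}_\ell=x_{(i-1)m+\ell}$. For $\subseteq$, given a word $M_k\cdots M_1$ with each $M_i=A^m_{E,(0,z^{(i)})}\in S$, one sets $x_{(i-1)m+\ell}:=z^{(i)}_\ell$ for $0\le\ell<m$, $1\le i\le k$ (and arbitrarily elsewhere), obtaining $x\in X$ with $A^{km}_{E,(0,x)}=M_k\cdots M_1$; this step uses precisely that $X$ is the full shift, so there is no compatibility constraint linking consecutive blocks. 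This identity, and the freedom it rests on, is the one essential ingredient; everything else is bookkeeping.

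Given the identity, the two implications follow by routine norm estimates. For ($\Leftarrow$): if $S$ is uniformly hyperbolic with constant $\Lambda>1$, then $\|A^{km}_{E,(0,x)}\|>\Lambda^{k}$ for all $k,x$; writing a general $n$ as $n=km+r$ with $0\le r<m$ and peeling the length-$r$ head $H$ off $A^n_{E,(0,x)}=H\cdot A^{km}_{E,(0,x)}$, and using that $\|H^{-1}\|$ is bounded by a constant $R<\infty$ (only finitely many $H$ occur, the potential taking finitely many values), one gets $\|A^{n}_{E,(0,x)}\|\ge c\,(\Lambda^{1/m})^{n}$ for some $c>0$; passing from $(0,x)$ to an arbitrary $\omega=(k,x)\in\Omega$ only replaces the potential by a $k$-shifted copy of some $V_{(0,y)}$ and costs another bounded factor, so Definition \ref{def:4} holds and $(T,A_E)$ is uniformly hyperbolic. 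For ($\Rightarrow$): uniform hyperbolicity of $(T,A_E)$ gives $\|A^{km}_{E,(0,x)}\|>C\lambda^{km}=C(\lambda^{m})^{k}$, so by the identity every length-$k$ product of elements of $S$ has norm $>C(\lambda^{m})^{k}$; I would then invoke the standard fact that, for a finite subset of $SL(2,\mathbb{R})$, the existence of any exponential lower bound $\|\cdot\|\ge C\mu^{\mathrm{length}}$ with $C>0$, $\mu>1$ is equivalent to uniform hyperbolicity in the sense of Definition \ref{def:1} (equivalently, to the cone criterion of Theorem \ref{thm:2}), which upgrades this to the bound with prefactor $1$ and finishes.

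The only genuinely delicate point I anticipate is this last upgrade in ($\Rightarrow$): Definition \ref{def:4} carries a free multiplicative constant while Definition \ref{def:1} does not, so one must either cite the standard equivalence just mentioned or, equivalently, first pass to the finite set $S^{(N)}$ of length-$N$ products for $N$ large enough that $C\lambda^{mN}>1$ and check that uniform hyperbolicity is insensitive to this replacement. All the remaining "peel off a bounded head or tail" estimates are uniform in $\omega\in\Omega$ only because the transfer matrices take finitely many values; this is clerical, but worth stating carefully, since it is exactly the content of the lemma that moving from the continuum $\Omega$ to a finite matrix set costs nothing.
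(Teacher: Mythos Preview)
Your proposal is correct and follows essentially the same route as the paper: both rest on the combinatorial identity between words in $S$ and the cocycle products $A^{km}_{E,(0,x)}$ (which, as you note, uses that $X$ is the full shift), and both handle the $(\Leftarrow)$ direction by a bounded-head estimate over finitely many possible remainders. The only real difference is in $(\Rightarrow)$, where you must upgrade $\|\prod_I M_i\|>C(\lambda^m)^k$ to a bound without the prefactor $C$. The paper does this by a self-contained power trick: realize $(\prod_I M_i)^n$ as $A^{nmk}_{E,(0,x)}$ for a periodic $x$, use submultiplicativity to get $C^{1/n}(\lambda^m)^k<\|\prod_I M_i\|$, and let $n\to\infty$. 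You instead invoke the standard equivalence (via the cone criterion of Theorem~\ref{thm:2}) between uniform hyperbolicity with and without a multiplicative constant. Both are valid; the paper's argument is more elementary and avoids appealing to Theorem~\ref{thm:2}, while your citation is shorter. Your $S^{(N)}$ alternative, as written, does not by itself remove the constant and would need to be combined with something like the paper's power trick to work. You also explicitly treat the passage from $\omega=(0,x)$ to general $\omega=(k,x)\in\Omega$, a point the paper's proof glosses over.
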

 Remark: There are \edited{at most} 2 distinct elements in $\{A_{E,(0,x)}:\text{ for some }(0,x)\in\edited{\Omega}\}$.  These elements are expressible as
 \[\begin{bmatrix}E-\lambda_0x_0-c_0 & -1 \\ 1 & 0\end{bmatrix}\] for $x_0\in\{0,1\}$.  Similarly, $\{\edited{A^2_{E,(0,x)}:(0,x)\in\Omega}\}$ has at most 4 distinct elements, expressible as \[\begin{bmatrix}E-\lambda_0x_0-c_0 & -1 \\ 1 & 0\end{bmatrix}\cdot\begin{bmatrix} E-\lambda_1x_1-c_1 & -1 \\ 1 & 0\end{bmatrix}\] for $x_0,x_1\in\{0,1\}$. The set $\{A^m_{E,(0,x)}:(0,x)\in\edited{\Omega}\}$ has at most $2^m$ matrices.  

\begin{proof}\

  $(\leftarrow)$ If the set of matrices is uniformly hyperbolic, then there exists $\lambda>1$  such that for any ordered set $I$ of the elements in $\{M:M=A^m_{E,(0,x)}\text{ for some }(0,x)\in\Omega\}$ where $|I|=k$,  $\big|\big|\prod_{ I}  M_i\big|\big|>\lambda^k$.  \edited{Here $I$ can have repeated elements.}  For any $(0,x)\in\Omega$ and any $k$, there exists $|I|=k$, such that $\prod_{I} M_i=A^{km}_{E,(0,x)}.$ Therefore \[\Big|\Big|A^{km}_{E,(0,x)}\Big|\Big|=\Big|\Big| \edited{\prod_{I} M_i } \Big|\Big|>\lambda^{k}=\sqrt[m]{\lambda}^{mk}.\]
\edited{It is necessary, however, to bound $\Big|\Big|A^j_{E,(0,x)} \Big|\Big|$ for any $j$.  Next, a bound for $\Big|\Big|A^{j}_{E,(0,x)}\Big|\Big|$ will be provided for $0< j<m.$   Given that for fixed $E,(0,x),$ and $j\in(0,m)$, \edited{we have} \[\inf_{||\vec{v}||=1}\Big|\Big|A^j_{E,(0,x)}(\vec{v})\Big|\Big|=\Big|\Big|\big(A^j_{E,(0,x)}\big)^{-1}\Big|\Big|^{-1}\in(0,1)\]  Keeping $E$ fixed, there are at most $\sum_{0< j<m}2^j$ matrices expressible as $A^j_{E,(0,x)}$.  So, there exists $C$ such that
 \[\min_{\substack{ 0< j< m\\ (0,x) \in\Omega}} \inf_{||\vec{v}||=1} \Big|\Big|A^{j}_{E,(0,x)}(\vec{v})\Big|\Big|\Big/\sqrt[m]{\lambda}^j=C>0.   \]\edited{F}or $j\in(0,m),$ \edited{this} gives the inequality \[\inf_{||\vec{v}||=1}\Big|\Big|A^j_{E,(0,x)}(\vec{v})\Big|\Big|\geq C\sqrt[m]{\lambda}^j,\] and this brings the conclusion that for all $x\in X$:\[
\begin{array}{c} \Big|\Big|A^{mk+j}_{E,(0,x)}\Big|\Big|=\sup_{||\vec{v}||=1}\Big|\Big|A^{j}_{E,(mk\;mod\;m,x)} \cdot A^{mk}_{E,(0,x)} \vec{v} \Big|\Big|\geq\\ \\   \inf_{||\vec{v}||=1} \Big|\Big|A^{j}_{E,(0,x)}(\vec{v})\Big|\Big|\cdot \Big|\Big|A^{mk}_{E,(0,x)}\Big|\Big| > C\sqrt[m]{\lambda}^{mk+j}. \end{array}\]
 By definition 4, the cocycle is uniformly hyperbolic. }

$(\rightarrow)$ 
Assume the cocycle $(T,A)$ is uniformly hyperbolic,  then there exists a $\lambda>1$ and a $C>0$ such that for any $\omega\in\Omega$
\[\Big|\Big|A^k_{E,\omega} \Big|\Big|> C\lambda^{k}\]  

For any ordered set $I$ (with $|I|=k$) consisting of the matrices from the set $\{M_i\}$, there exists $\edited{(0,x)}\in\Omega$ such that $A_{E,\edited{(0,x)}}^{mk}=\prod_I M_i$.  Therefore,\[ \begin{array}{c} C\lambda^{mk}<\big|\big|\prod_I M_i\big|\big| \Rightarrow C\big(\lambda^m\big)^k<\big|\big|\prod_I M_i\big|\big|\end{array}.\] \edited{ For a set of matrices to be uniformly hyperbolic, we technically need to express such an inequality without a constant $C$.  To address this, we note that for any $\prod_i M_i,$ there exists $(0,x)\in\Omega$ such that for any $n\in\mathbb{Z}^+$, $(A_{E,(0,x)}^{nmk})=(\prod_I M_i)^n$, and we have}
\edited{\[ C\lambda^{nmk}<\Big|\Big|A^{nmk}_{E,(0,x)}\Big|\Big|=\Big|\Big|\Big(\prod_iM_i\Big)^n\Big|\Big|\leq \Big|\Big|\prod_iM_i\Big|\Big|^n.\]Taking the $n^{th}$ root,\[ C^{1/n}(\lambda^m)^k<\Big|\Big|\prod_iM_i\Big|\Big|,\] and allowing $n$ to be arbitrarily, we get large,\[(\lambda^m)^k\leq \Big|\Big|\prod_iM_i\Big|\Big|\Rightarrow (\lambda^{m/2})^k< \Big|\Big|\prod_iM_i\Big|\Big|.\]Therefore, the cocycle being uniformly hyperbolic \edited{from definition \hyperref[def:4]{$4$}} implies the set of matrices is uniformly hyperbolic \edited{from definition \hyperref[def:1]{$1$}}.}

 \end{proof}

\subsection{\label{sec:2D}Conjecture}
%The potential $\{V(n)\}$ is a random potential defined by  \hyperlink{eq:5}{(5)}.  Using the following dynamical systems}
%\begin{equation}
%   \edited{ \begin{aligned}
%\edited{(\mathbb{Z}_{m},T_0)}\text{ with }T_0:x\mapsto x+1\edited{\;mod\;m} \\
%\edited{(X,T_1)}\text{ with }T_1\text{ as a shift operator.}  \\ \edited{(\Omega,T)\text{ with }\Omega=\mathbb{Z}_m\times X \;\&\; T=T_0\times T_1.}\end{aligned} }\end{equation}  Given the mappings from the beginning of section 1C, we can use the mapping $f:\Omega\rightarrow\mathbb{R}$ from 2C such that $(k,x)\mapsto \lambda_k\cdot x_0+c_k$ and \edited{the cocycle, $(T,A):\Omega\times\mathbb{R}^2\rightarrow \Omega\times\mathbb{R}^2$.  For any fixed constant $E$, the mapping $A_{E,\omega}:\mathbb{R}^2\rightarrow\mathbb{R}^2$ is} \[\edited{A_{E,(k,x)}\overrightarrow{v}=\begin{bmatrix} E-\edited{V_{(0,x)}(k)} & -1 \\ 1 & 0 \end{bmatrix} \cdot \overrightarrow{v}.}\] 

% \edited{To summarize, for the family of discrete Schr\"odinger operators $\{H_V\}$, the cocycle can be defined by the potential. The potential is defined by the distribution from equation 5 and is ergodic, and the cocycle is found to be uniformly hyperbolic if and only if a certain set of matrices is uniformly hyperbolic.} 

\edited{By \hyperref[thm:3]{Johnson's Theorem} and lemma \hyperref[lem:1]{$3$}, the almost-sure spectrum $\sigma(H_V)$ consisting of an infinite number of intervals is equivalent to the set of matrices being uniformly hyperbolic over an infinite number of intervals over $E.$  Theorem \hyperref[thm:1]{$1$} states that if the potential of an ergodic family is given by \hyperref[eq:5]{$(5)$} and $m=2$, the almost sure spectrum is a finite number of intervals. For larger $m$ this theorem does not apply. Using the paper by Avila, Bochi, and Yoccoz \cite{aby}, we can consider a geometric approach to this question. There exists open region $\mathcal{H}\subset SL(2,\mathbb{R})$ defined by} \[\edited{\scriptsize \mathcal{H}=\{(A_1,\cdots,A_m)\in SL(2,\mathbb{R})^m:\edited{\{A_i\}_{i=1}^{m}}\text{ is uniformly hyperbolic} \}.}\] \edited{We can parameterize a path in $SL(2,\mathbb{R})^{2^m}$ by $E$ and identify the intervals where the path intersects $\mathcal{H}$.  If the set of matrices defined in lemma \hyperref[lem:1]{1} parametized by $E$ define a path in $SL(2,\mathbb{R})^{2^m}$ and the spectrum consists of infinitely many intervals, then a path must intersect the boundary of $\mathcal{H}$ infinitely often.}

\begin{theorem}[\edited{Avila-Bochi-Yoccoz \cite{aby}, Theorem 4.1}]
\begin{center}
Define $\mathcal{H}\subset SL(2,\mathbb{R})^N$ to be a region of $N$-tuples of matrices such that the matrices as a set is uniform hyperbolic, and let $(A_1,\cdots,A_N)$ belong to the boundary of $\mathcal{H}$.  Then one of the following possibilities hold:
\begin{enumerate}
    \item There exists product $\prod_{ I} A_i$ which is parabolic for some ordered set $I$
    
    \item There exists product $\prod_{ I} A_i$ which is the identity for some ordered set $I$
    
    \item There exists products $\prod_{I} A_i$, $\prod_{J} A_j$, and $\prod_{K} A_k$ such that $(\prod_{I} A_i)\cdot u(\prod_{J} A_j)=s(\prod_{K} A_k)$ for some ordered sets $I$, $J$, $K$.  \edited{ Here $u(\cdot)$ is the respective unstable eigenvector and $s(\cdot)$ is the respective stable eigenvector.}
\end{enumerate}\end{center} \end{theorem}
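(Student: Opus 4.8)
The plan is to reduce uniform hyperbolicity to the multicone criterion of Theorem \ref{thm:2} and then to analyse how a strictly invariant multicone degenerates along a sequence approaching $\partial\mathcal{H}$. Write $\mathbf{A}=(A_1,\dots,A_N)\in\partial\mathcal{H}$ and pick $\mathbf{A}^{(k)}\in\mathcal{H}$ with $\mathbf{A}^{(k)}\to\mathbf{A}$. First I would show that every product of $\mathbf A$ is hyperbolic, parabolic, or $\pm I$: for $\mathbf{B}\in\mathcal{H}$ with hyperbolicity constant $\lambda>1$ and an ordered word $I$ with $|I|=m$, applying Definition \ref{def:1} to the word consisting of $I$ repeated $n$ times gives $\|(\prod_I B_i)^n\|>\lambda^{nm}$, so the spectral radius of $\prod_I B_i$ is at least $\lambda^m>1$ and $\prod_I B_i$ is hyperbolic with $|\mathrm{tr}(\prod_I B_i)|>2$. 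Letting $\mathbf{B}=\mathbf{A}^{(k)}\to\mathbf{A}$ gives $|\mathrm{tr}(\prod_I A_i)|\ge 2$, so $\prod_I A_i$ is hyperbolic, parabolic, or $\pm I$. If some product is parabolic we are in possibility (1), and if some product is $\pm I$ we are in possibility (2); so assume from now on that every product of $\mathbf A$ is hyperbolic, and we must produce the crossing identity of possibility (3).

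For such a tuple, let $\Lambda^u\subset\mathbb{P}^1$ be the closure of the set of unstable directions $u(\prod_I A_i)$ over all ordered words $I$, and $\Lambda^s$ the closure of the stable directions $s(\prod_I A_i)$. Since $A_j$ applied to the unstable direction of $\prod_I A_i$ is again a limit of unstable directions (of $A_j(\prod_I A_i)^n$ applied to a fixed vector), $\Lambda^u$ is forward invariant, $A_j(\Lambda^u)\subset\Lambda^u$ for all $j$, and dually $A_j^{-1}(\Lambda^s)\subset\Lambda^s$. The multicone criterion then says that, under our standing assumption, $\mathbf A$ is uniformly hyperbolic if and only if $\Lambda^u\cap\Lambda^s=\varnothing$: if the sets are disjoint one thickens $\Lambda^u$ to a proper open multicone missing $\Lambda^s$ and checks strict invariance, while conversely a strictly invariant proper multicone must contain $\Lambda^u$ and avoid $\Lambda^s$. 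Since $\mathbf A\notin\mathcal{H}$, we get $\Lambda^u\cap\Lambda^s\neq\varnothing$; fix $z$ in this intersection.

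It remains to upgrade "$z\in\Lambda^u\cap\Lambda^s$" to an exact relation $(\prod_I A_i)\,u(\prod_J A_j)=s(\prod_K A_k)$. Here one uses that the extreme directions of these invariant sets are realised by periodic itineraries: the endpoints of a minimal forward-invariant multicone containing $\Lambda^u$ are fixed by some word and hence equal unstable directions $u(\prod_J A_j)$ of honest finite products, and symmetrically the relevant boundary directions on the $\Lambda^s$ side are stable directions $s(\prod_K A_k)$. Because $\mathbf A$ lies on $\partial\mathcal{H}$ rather than in the interior of $\mathcal{H}^{\mathrm c}$, these two multicones are tangent rather than overlapping, meeting at a single such extreme direction; following that direction along the word which carries the periodic unstable direction $u(\prod_J A_j)$ onto it produces $\prod_I A_i$ with $(\prod_I A_i)\,u(\prod_J A_j)=s(\prod_K A_k)$, which is possibility (3).

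The first part is routine; the real work is the last step, and two points need care: (a) controlling the sizes of the approximating multicones so that a subsequential Hausdorff limit is still a \emph{proper} multicone, rather than swelling up to all of $\mathbb{P}^1$; and (b) showing the contact of $\Lambda^u$ and $\Lambda^s$ occurs at a direction of the form $(\prod_I A_i)u(\prod_J A_j)$ that equals some $s(\prod_K A_k)$, i.e. that the extremal geometry of these (generally Cantor) invariant sets is governed by periodic sequences and hence by finite products. This finiteness statement, together with the bookkeeping that sorts the degeneration into exactly (1)--(3), is the substantive content of Avila--Bochi--Yoccoz \cite{aby}.
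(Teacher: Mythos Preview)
The paper does not prove this theorem; it merely states it as Theorem~4.1 of Avila--Bochi--Yoccoz \cite{aby} and uses it to motivate the conjecture in Section~\ref{sec:2D}. There is therefore no proof in the paper to compare your sketch against.

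That said, as a sketch of the ABY argument your outline is broadly sound but incomplete in exactly the place you flag. The first reduction (every product of a boundary tuple has $|\mathrm{tr}|\ge 2$) is correct and cleanly done. One small gap: if some product equals $-I$ rather than $I$, you should note that its square equals $I$, so possibility (2) still holds for the doubled word. The substantive issue is the equivalence you assert in the all-hyperbolic case, namely that $\mathbf A\in\mathcal H$ iff $\Lambda^u\cap\Lambda^s=\varnothing$: simply ``thickening $\Lambda^u$'' does not automatically produce a \emph{strictly} invariant multicone, and upgrading a point of $\Lambda^u\cap\Lambda^s$ to an exact heteroclinic relation $(\prod_I A_i)\,u(\prod_J A_j)=s(\prod_K A_k)$ with \emph{finite} words $I,J,K$ requires the Markov/combinatorial machinery that ABY develop. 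You correctly identify these as the hard steps and defer them to \cite{aby}; as written, then, this is a roadmap rather than a self-contained proof.
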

\edited{Further research in this field can be seen in \cite{ca}.  The paper by Avila, Bocci, and Yoccoz \cite{aby} outlines an example in section 4.7 such that a path interests the boundary of $\mathcal{H}$ infinitely often. It would seem possible that for a large enough period, the spectrum may consist of an infinite number of intervals.  }

 \section{\label{sec:3}Key Lemma}
 
%In this section, \edited{we} will \edited{provide a lemmaem 1 with the use of \edited{lemma \hyperref[lem:1]{$1$} and Johnson's Theorem}.  The proof for this lemma will be provided in section \edited{\hyperref[sec:4D]{$4D$}, with supporting lemmas in sections \hyperref[sec:4A]{$4A$}, \hyperref[sec:4B]{$4B$}, and \hyperref[sec:4C]{$4C$}}.   

Theorem 1 states that if \edited{a family of ergodic Schr\"{o}dinger operators has a potential given by \hyperref[eq:2]{$(2)$}}, then the \edited{almost sure} spectrum consists of a finite number of intervals.  \edited{Applying lemma 1 with $m=2$ , we simply need to find a way to calculate the set $\{E:\{A_{E,(0,x)}^2\}\text{ is uniformly hyperbolic}\}$.  We can calculate the set using the following lemma.}  
\edited{ \label{lem:2}\begin{lemma} The set of matrices  \[\bigg\{ AC,AD,BC,BD:\begin{array}{c}A,B,C,D\text{ are transfer matrices and} \\  A_{11}\geq B_{11}\text{ and }C_{11}\geq D_{11}\end{array} \bigg\}\] is uniformly hyperbolic if and only if the matrices $AC,AD,BC,\;\&\;BD$ are individually hyperbolic.
 \end{lemma}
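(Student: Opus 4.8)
The easy direction is immediate. If the four--element set $\{AC,AD,BC,BD\}$ is uniformly hyperbolic in the sense of Definition~\ref{def:1}, then for $M$ any one of these matrices the powers $M^{k}$ satisfy $\|M^{k}\|>\lambda^{k}$ for all $k$, so by Gelfand's formula the spectral radius of $M$ is at least $\lambda>1$; since $M\in SL(2,\mathbb{R})$ its eigenvalues are then a real pair $\mu,1/\mu$ with $|\mu|>1$, i.e.\ $M$ is hyperbolic. So the content of the lemma is the converse: assuming each of $AC,AD,BC,BD$ is hyperbolic, I must exhibit a cone and invoke Theorem~\ref{thm:2}.

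For the converse I would first set up the projective picture used in the paper: a transfer matrix $T_{v}=\begin{bmatrix}E-v&-1\\1&0\end{bmatrix}$ acts on $\mathbb{P}^{1}$ by the Möbius homeomorphism $f_{v}(w)=(E-v)-1/w$, which is orientation preserving on the circle $\mathbb{P}^{1}$; and the hypothesis $A_{11}\ge B_{11}$ (resp.\ $C_{11}\ge D_{11}$) says exactly that the potential value carried by $A$ is no larger than that carried by $B$ (resp.\ $C\le D$), equivalently $f_{v}=U_{v'-v}\circ f_{v'}$ with $U_{t}:w\mapsto w+t$ whenever $v\le v'$, so lowering the potential post-composes the action with a ``rotation'' in one fixed direction. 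The plan is then to use the period-$2$ (Markov) structure: I would look for two open arcs $J_{0},J_{1}\subsetneq\mathbb{P}^{1}$ with
\[
A(\overline{J_{1}})\cup B(\overline{J_{1}})\subset J_{0},\qquad C(\overline{J_{0}})\cup D(\overline{J_{0}})\subset J_{1}.
\]
Given such arcs, each of $AC,AD,BC,BD$ maps $\overline{J_{0}}$ strictly inside $J_{0}$, and Theorem~\ref{thm:2} with cone $C=J_{0}$ (a single interval, so $\overline{C}\subsetneq\mathbb{P}^{1}$ is a finite number of intervals) yields uniform hyperbolicity of the four-element set.

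To construct $J_{0},J_{1}$ I would use hyperbolicity of the four products: each Möbius map $g_{ij}:=f_{a_{i}}\circ f_{c_{j}}$ (with $a_{1}=v_A\le a_{2}=v_B$ and $c_{1}=v_C\le c_{2}=v_D$) has an attracting fixed point $u_{ij}$ and a repelling fixed point $s_{ij}$ on $\mathbb{P}^{1}$, which one computes explicitly from the eigenvectors of $T_{a_{i}}T_{c_{j}}$. The key claim is that the four attracting directions $u_{ij}$ all lie in a single arc $\Gamma\subsetneq\mathbb{P}^{1}$ containing none of the four repelling directions $s_{ij}$, and that a small enlargement of $\Gamma$ is forward invariant under all four $g_{ij}$; one then takes $J_{0}$ to be this enlargement and $J_{1}$ a compatible arc (e.g.\ chosen so that $C(\overline{J_{0}}),D(\overline{J_{0}})\subset J_{1}$ and $A(\overline{J_{1}}),B(\overline{J_{1}})\subset J_{0}$, a consistency the monotone structure should make attainable). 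I expect the non-interleaving step to be the main obstacle, and it is precisely where the ordering hypotheses $A_{11}\ge B_{11}$, $C_{11}\ge D_{11}$ are indispensable: without them two hyperbolic matrices can be interlocked so that their product is elliptic. Concretely, one must show that hyperbolicity of the four individual products already excludes the failure modes of the Avila--Bochi--Yoccoz boundary description (the theorem quoted above) --- no product of $AC,AD,BC,BD$ is parabolic or the identity, and no heteroclinic tangency occurs --- and this appears to require the explicit eigendirection formulas together with a case split according to whether each trace $(E-a_{i})(E-c_{j})-1$ exceeds $+2$ or lies below $-2$ (equivalently, the sign of $(E-a_{i})(E-c_{j})$) and a monotonicity statement for $u_{ij},s_{ij}$ in the pair $(v_A,v_C)$ over the four corners of $\{a_1,a_2\}\times\{c_1,c_2\}$. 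This is presumably what the preliminary lemmas of Sections~4A--4C are for; granting them, the remaining cone inclusions are a routine verification.
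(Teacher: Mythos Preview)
Your proposal is correct and follows essentially the same route as the paper: both argue the nontrivial direction by showing that when all four products are hyperbolic their unstable directions $u_{ij}$ lie in a single arc of $\mathbb{P}^{1}$ containing none of the stable directions $s_{ij}$, so a principal cone exists and Theorem~\ref{thm:2} applies; the paper carries this out via the explicit eigenvector formulas (Table~1), the non-coincidence result (Corollary~\ref{cor:1}), and the monotonicity in the parameters (Lemma~\ref{lem:6}), splitting into the trace-sign scenarios exactly as you anticipate. The two-arc Markov pair $J_{0},J_{1}$ you introduce is a harmless detour the paper does not take---it works directly with a single cone for the four period-two products---and the appeal to the Avila--Bochi--Yoccoz boundary classification is likewise not needed, since the direct cone construction already suffices.
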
}
 \edited{If the set is uniformly hyperbolic, all the matrices are hyperbolic.  In the regions where all the matrices are all hyperbolic, they turn out to form a principal cone.  We will identify the principal cones in the projective space \edited{in section \hyperref[sec:4]{$4$}}, proving lemma \hyperref[lem:2]{$2$}.}

 \section{\label{sec:4}Calculations}
 \edited{In t}his section, we will  \edited{provide a few technical lemmas necessary to prove} lemma \hyperref[lem:2]{$2$}.  We will begin with defining the appropriate transfer matrices and their products. Then we will define the intervals over which these products are hyperbolic.  \edited{After that, over these intervals the corresponding set of matrices will be shown to have a principal cone, proving the set is uniformly hyperbolic.}  
 \subsection{\label{sec:4A}Transfer Matrices}
 \edited{With} potential $\edited{V}$ given by \edited{the equation} \[\nu(n)=\begin{cases} \lambda_0 B(p_0)+c_0 & n\text{ is even} \\ \lambda_1 B(p_1)+c_1 & n\text{ is odd}\end{cases}.\] \edited{It is enough to consider the case where, $\lambda_0,\lambda_1,\;c_1\edited{\geq}0,$ and $c_0=0$.  For any $n$, $\nu(n)$ and $\nu(n+1)$ can take on at most 2 values, so we can define the constants} \[\edited{\begin{array}{c} c_0=\min\limits_n\{\nu(2n)\}\\ \\ \lambda_0=\max\limits_n\{\nu(2n)\}-\min\limits_n\{\nu(2n)\}\geq 0\\ \\ c_1=\min\limits_n\{\nu(2n+1)\} \\ \\ \lambda_1=\max\limits_n\{2n+1\}-\min\limits_n\{2n+1\}\geq 0\end{array}}.\]  \edited{We can also assume that $c_0\leq c_1$ without loss of generality.  If a constant is added to the potential, the spectrum of the operator is shifted by that value, so we can shift the potential by $-c_0$ without altering the spectral type or topolgical structure.  This redefines the constants $c_0=0$ and $c_1$ as a nonnegative number.} The four possible transfer matrices can now be written as 
\[\begin{array}{c}A=\begin{bmatrix} E & -1 \\ 1 & 0 \end{bmatrix}\hspace{1em} B=\begin{bmatrix} E-\lambda_0 & -1 \\ 1 & 0 \end{bmatrix}\\ \\ C=  \begin{bmatrix} E-c_1 & -1 \\ 1 & 0 \end{bmatrix}\hspace{1em} D=\begin{bmatrix} E-\lambda_1-c_1 & -1 \\ 1 & 0 \end{bmatrix}\end{array}\]
and \edited{these will be the} matrices $A,B,C,$ and $D$ in lemma \hyperref[lem:2]{$2$}.  \edited{For the context of this paper it is useful to use this expression}.  Notice that $A^2_{E,(0,x)}$ to be one of the 4 matrices $AC,AD,BC,$ or $BD$, which is the set 
\[
\scriptsize\edited{\begin{array}{c}\{AC,AD,BC,BD\}=\Bigg\{\begin{bmatrix} E & -1 \\ 1 & 0 \end{bmatrix}\cdot\begin{bmatrix} E-c_1 & -1 \\ 1 & 0 \end{bmatrix},   \begin{bmatrix} E & -1 \\ 1 & 0 \end{bmatrix}\cdot \begin{bmatrix} E-\lambda_1-c_1 & -1 \\ 1 & 0 \end{bmatrix},\\ \\     \begin{bmatrix} E-\lambda_0 & -1 \\ 1 & 0 \end{bmatrix}\cdot\begin{bmatrix} E-c_1 & -1 \\ 1 & 0 \end{bmatrix},    \begin{bmatrix} E-\lambda_0 & -1 \\ 1 & 0 \end{bmatrix}\cdot \begin{bmatrix} E-\lambda_1-c_1 & -1 \\ 1 & 0 \end{bmatrix}\Bigg\}\\ \\ =\Bigg\{\begin{bmatrix} E(E-c_1)-1 & -E \\ (E-c_1) & -1 \end{bmatrix},\begin{bmatrix} E(E-\lambda_1-c_1)-1 & -E \\ (E-\lambda_1-c_1) & -1 \end{bmatrix} \\ \\ \begin{bmatrix} (E-\lambda_0)(E-c_1)-1 & -(E-\lambda_0) \\ (E-c_1) & -1 \end{bmatrix},\begin{bmatrix} (E-\lambda_0)(E-\lambda_1-c_1)-1 & -(E-\lambda_0) \\ (E-\lambda_1-c_1) & -1 \end{bmatrix}\Bigg\}.\end{array}}\]  
In the order listed, these matrices will be denoted as $A_1,A_2,A_3,\;\&\;A_4$ \edited{such that}
\begin{equation}\label{eq:7}
\begin{array}{cc}A_1=AC &  A_2=AD\\ \\A_3=BC & A_4=BD.\end{array}\end{equation}
\edited{To prove lemma \hyperref[lem:2]{$2$},} we need to show that if each of these four matrices is hyperbolic, then the set of matrices $\{A_1,A_2,A_3,A_4\}$ is uniformly hyperbolic.  \edited{Let us recall that an $SL(2,\mathbb{R})$ matrix is hyperbolic if and only if an absolute value of its trace is greater than $2$.} In order to do that, we will identify the intervals over $E$ where the absolute value of the traces of all four matrices are greater than 2.

For each of these four matrices the eigenvectors can be easily calculated. \edited{We will benefit from the following definition and lemma first.} 
\begin{defn}\label{def:5}
\edited{ Denoted $r\in\mathbb{R}$, the \emph{signed spectral radius} of a matrix $A\in SL(2,\mathbb{R})$ satisfies}\[\begin{array}{c}sign(r)=sign(tr(A)) \\ \\ |r|=\text{spectral radius of }A \end{array}.\]
\end{defn}

\edited{In terms of $\{A_i\}$, the respective signed spectral radius will be written as $r_i$.}

\begin{lemma}\label{lem:3}
Let $r$ be a signed spectral radius of $A\edited{\in SL(2,\mathbb{R})}$.  Then the following hold:
\begin{enumerate}
\item If $A$ is hyperbolic, $r$ is the unstable eigenvalue, and $r^{-1}$ is the stable eigenvalue.
\item $r\neq 0$ as it is a root of $\lambda^2-tr(A_i)\lambda+1=0$.
\item $r>0$ if and only if the eigenvalues are positive.
\item $r<0$ if and only if the eigenvalues are negative.
\end{enumerate}
\end{lemma}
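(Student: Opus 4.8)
The plan is to read everything off the characteristic polynomial $p(\lambda)=\lambda^2-\mathrm{tr}(A)\lambda+\det(A)=\lambda^2-\mathrm{tr}(A)\lambda+1$, using only that $\det A=1$. Write its roots as $\lambda_1,\lambda_2$, so $\lambda_1\lambda_2=1$ and $\lambda_1+\lambda_2=\mathrm{tr}(A)$. I would first record the elementary fact that the spectral radius of any $A\in SL(2,\mathbb{R})$ is at least $1$: if $\lambda_1,\lambda_2$ are real they are nonzero reciprocals, so $\max(|\lambda_1|,|\lambda_2|)\geq 1$; if they are complex conjugates their common modulus is $\sqrt{\lambda_1\lambda_2}=1$. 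In particular $|r|\geq 1$, which already settles claim (2), $r\neq 0$. The alternative justification offered in the statement — that $r$ solves $p(\lambda)=0$ while $p(0)=1\neq 0$ — is the one to invoke when $A$ has real spectrum, which is exactly the case in which $r$ is literally one of the eigenvalues.

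Next I would prove (1). If $A$ is hyperbolic then $|\mathrm{tr}(A)|>2$, so the discriminant $\mathrm{tr}(A)^2-4$ is positive and $\lambda_1,\lambda_2$ are real and distinct. Since $\lambda_1\lambda_2=1>0$ the two eigenvalues share a sign, and after relabelling $|\lambda_1|>1>|\lambda_2|=|\lambda_1|^{-1}$; thus $\lambda_1$ is the unstable eigenvalue, $\lambda_2=\lambda_1^{-1}$ the stable one, and $|\lambda_1|$ equals the spectral radius. Because the eigenvalues have a common sign, that sign equals the sign of $\lambda_1+\lambda_2=\mathrm{tr}(A)$, so $\mathrm{sign}(\lambda_1)=\mathrm{sign}(\mathrm{tr}(A))=\mathrm{sign}(r)$ while $|\lambda_1|=|r|$; hence $r=\lambda_1$ is precisely the unstable eigenvalue and $r^{-1}=\lambda_2$ the stable one.

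Finally, for (3) and (4): whenever the eigenvalues are real — in particular for the hyperbolic products $A_i$ to which the lemma will be applied — the argument above shows they carry the common sign $\mathrm{sign}(\mathrm{tr}(A))$, and since $\mathrm{sign}(r)=\mathrm{sign}(\mathrm{tr}(A))$ by definition of the signed spectral radius, we get $r>0$ iff $\mathrm{tr}(A)>0$ iff both eigenvalues are positive, and symmetrically for $r<0$. The whole thing is a short $2\times 2$ linear-algebra exercise, and I do not expect a genuine obstacle; the only points requiring care are the sign bookkeeping — keeping straight that ``larger modulus'' selects the unstable eigenvalue while ``sign of the trace'' pins down the common sign of the pair — and the mild caveat that the eigenvalue-sign assertions in (3)--(4) are only meaningful when the spectrum is real, which is exactly the hyperbolic regime in which this preparatory lemma is used.
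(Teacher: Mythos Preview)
Your proof is correct and follows essentially the same approach as the paper: both arguments read the claims off the characteristic polynomial $\lambda^2-\mathrm{tr}(A)\lambda+1$, using that the product of the roots is $1$ to identify the eigenvalues as reciprocals and that the sign of their sum (the trace) governs the common sign in the real case. Your write-up is in fact more careful than the paper's brief sketch, particularly in making the sign bookkeeping explicit and in noting the caveat that claims (3)--(4) only have content when the spectrum is real.
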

\begin{proof} \

By definition, $r$ is a root of $\lambda^2-tr(A_i)\lambda+1=0$.  Therefore $r$ is real if and only if $A$ has strictly real roots.  Because $r$ is the eigenvalue of $A$ with a larger absolute value, and so the other eigenvalue of $A$ must be the inverse of $r$ and so \[|r^{-1}|<1<|r|\]
\edited{The trace is positive if and only if the roots of the polynomial are positive.  The roots are the eigenvectors $r$ and $r^{-1}$.}

\end{proof}

\edited{We can now, more easily, explicity express the unstable $(u_i)$ and stable $(s_i)$ eigenvectors of $A_i$.}

\begin{lemma}\label{lem:4}
For any hyperbolic  $A\edited{\in SL(2,\mathbb{R})}$, denote by $r$ its signed spectral radius.  Then $r$ is a zero of its minimal polynomial, and \edited{either} the vectors
\[\begin{bmatrix} -A_{22}+r \\ A_{21} \end{bmatrix} \hspace{1em} \&\hspace{1em} \begin{bmatrix} -A_{22}+r^{-1} \\ A_{21} \end{bmatrix}\]
\edited{are eigenvectors $u_A$ and $s_A$ or one of them is the zero vector.}  

For $A_1,\;A_2,\;A_3,\;\&\;A_4$,  assuming $A_i$ is hyperbolic, the stable and unstable eigenvectors are given by:
\[ u_i=\frac{1+r_i}{(A_i)_{21}}\hspace{1em} \&\hspace{1em} s_i=\frac{1+r_i^{-1}}{(A_i)_{21}}  \]
\edited{in the projective space parameterized by $\begin{bmatrix}x \\ y\end{bmatrix}\leftrightarrow\frac{x}{y}$.}\end{lemma}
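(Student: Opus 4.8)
The plan is to prove Lemma~4 by a direct computation with the $2\times 2$ transfer matrices, using Lemma~3 to control the signs of eigenvalues. First I would recall that for a hyperbolic $A\in SL(2,\mathbb{R})$ with signed spectral radius $r$, Lemma~3 guarantees $r\neq 0$, $r\neq r^{-1}$, and $r$ is a simple root of the minimal polynomial $\lambda^2-\mathrm{tr}(A)\lambda+1$; hence $A-rI$ has rank exactly $1$ and its column space (equivalently, the kernel of the rank-$1$ operator $A-r^{-1}I$) is the unstable eigenline, and symmetrically for $s_A$. The concrete vectors in the statement come from reading off a nonzero column of $\mathrm{adj}(A-r^{-1}I)$, or equivalently from solving $(A-rI)v=0$ componentwise: writing $A=\begin{bmatrix}A_{11}&A_{12}\\A_{21}&A_{22}\end{bmatrix}$, the second row of $(A-rI)v=0$ reads $A_{21}x+(A_{22}-r)y=0$, which is solved by $v=\begin{bmatrix}-(A_{22}-r)\\A_{21}\end{bmatrix}=\begin{bmatrix}r-A_{22}\\A_{21}\end{bmatrix}$, and one checks using $A_{11}+A_{22}=r+r^{-1}$ and $A_{11}A_{22}-A_{12}A_{21}=1$ that the first row is then automatically satisfied. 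The clause ``or one of them is the zero vector'' must be kept because it can happen that $A_{21}=0$ and simultaneously $A_{22}=r$ (respectively $A_{22}=r^{-1}$) — i.e. when $A$ is upper triangular — in which case one of the two displayed vectors degenerates; in that triangular case the surviving vector is still the correct eigenvector and the missing eigenline is $\begin{bmatrix}1\\0\end{bmatrix}$.

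Next I would specialize to $A_1,A_2,A_3,A_4$ from \eqref{eq:7}. For each of these, $A_{21}$ is one of $E-c_1$ or $E-\lambda_1-c_1$ and $A_{22}=-1$, so the displayed vectors become $\begin{bmatrix}1+r_i\\(A_i)_{21}\end{bmatrix}$ and $\begin{bmatrix}1+r_i^{-1}\\(A_i)_{21}\end{bmatrix}$. Passing to the projective chart $\begin{bmatrix}x\\y\end{bmatrix}\leftrightarrow x/y$ gives exactly $u_i=\dfrac{1+r_i}{(A_i)_{21}}$ and $s_i=\dfrac{1+r_i^{-1}}{(A_i)_{21}}$, provided $(A_i)_{21}\neq 0$; when $(A_i)_{21}=0$ one of the two coincides with $\infty$ in the chart, consistent with the triangular degeneracy above, and the formula should be read in $\mathbb{P}^1$. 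I would also note in passing that since $r_i r_i^{-1}=1>0$, one has $1+r_i$ and $1+r_i^{-1}$ of the same sign as $1+r_i$ unless $r_i<0$; and by Lemma~3(3)--(4) the sign of $r_i$ is the sign of $\mathrm{tr}(A_i)$, which will later be used to locate $u_i,s_i$ relative to each other on $\mathbb{P}^1$ — but that ordering is the business of the later lemmas, not of this one.

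The one genuinely substantive point, and the step I expect to be the main (if modest) obstacle, is handling the degenerate cases cleanly: verifying that the two candidate vectors are never simultaneously zero (which would follow from hyperbolicity: if $A_{21}=0$ then $A$ is triangular with eigenvalues $A_{11}=r^{\pm 1}$ and $A_{22}=r^{\mp1}$, and these are distinct since $A$ is hyperbolic, so exactly one of $r-A_{22}$, $r^{-1}-A_{22}$ vanishes), and checking that in that case the formulas for $u_i,s_i$ still correctly describe the eigenlines in $\mathbb{P}^1$ with the convention that a zero numerator over a zero denominator is interpreted as the point $\begin{bmatrix}1\\0\end{bmatrix}=\infty$. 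Everything else is a one-line substitution plus the trace and determinant identities for $SL(2,\mathbb{R})$ matrices, so the writeup would be: (i) reduce to finding $\ker(A-rI)$ and $\ker(A-r^{-1}I)$ via Lemma~3; (ii) exhibit the column vectors and verify they lie in the respective kernels by direct computation; (iii) dispose of the triangular case as above; (iv) substitute $A_{22}=-1$ and $A_{21}=(A_i)_{21}$ and pass to the chart to obtain the stated scalar formulas for $u_i$ and $s_i$.
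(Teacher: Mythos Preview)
Your proposal is correct and follows essentially the same approach as the paper: both solve $(A-rI)v=0$ by reading off the second row $A_{21}x+(A_{22}-r)y=0$ to obtain $v\propto\begin{bmatrix}r-A_{22}\\A_{21}\end{bmatrix}$, then specialize to $(A_i)_{22}=-1$ and pass to the projective chart. Your treatment of the triangular degeneracy is in fact more careful than the paper's, which simply notes that for the specific $A_i$ the second row of $A_i-r_iI$ is never $\begin{bmatrix}0&0\end{bmatrix}$ (indeed $|r_i|>1$ forces $r_i\neq -1=(A_i)_{22}$ and $r_i^{-1}\neq -1$).
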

\begin{proof} \
 
 The matrix
\[A=\begin{bmatrix} A_{11} & A_{12} \\ A_{21} & A_{22} \end{bmatrix}\] \edited{is $SL(2,\mathbb{R})$}.  This means that the two eigenvalues are $r$ and $r^{-1}$ if it is hyperbolic. \edited{Because $det(A-r I)=0$, the rows of  $A-r$ are multiples of each other.  This is also true for $A-r^{-1}I.$  For all $E$ where $A_i$ is hyperbolic, the second row of $A_i-r_i$ (or $A_i-r_i^{-1}$) is not $\begin{bmatrix} 0 &0 \end{bmatrix}$. If \[\begin{bmatrix} A_{21} & A_{22}-r_i  \end{bmatrix}\neq \begin{bmatrix} 0 & 0\end{bmatrix},\] then 
\[\begin{bmatrix} A_{11}-r & A_{12} \\ A_{21} & A_{22}-r  \end{bmatrix} \cdot \vec{u} = \begin{bmatrix} 0 \\ 0 \end{bmatrix}\] if and only if  \[\begin{bmatrix} A_{21} & A_{22}-r  \end{bmatrix} \cdot \vec{u} = \begin{bmatrix} 0  \end{bmatrix}.\] This gives
\[\vec{u}\propto \begin{bmatrix} r-A_{22} \\ A_{21}\end{bmatrix}\] assuming that $A$ is hyperbolic (here $\vec{u}\propto\vec{v}$ means there is some scalar $\lambda$ such that $\vec{u}=\lambda\vec{v}$).  The same holds true for $r^{-1}$, with \[\vec{s}\propto \begin{bmatrix}r^{-1} - A_{22} \\ A_{21}\end{bmatrix}\] proving the lemma.}\end{proof}  
Because \[\edited{A_i=\begin{bmatrix} (A_i)_{11} & E \\ (A_i)_{21} & -1\end{bmatrix}}\] \edited{We can explicitly define $u_i$ and $s_i$, which we do in table \hyperref[tab:1]{$1$} below.}  For the rest of the paper, we will use the notation \edited{$r_i$ for the spectral radius of $A_i,$ $u_i\in\mathbb{P}$ for the unstable eigenvector of $A_i$, and $s_i\in\mathbb{P}$ for the stable eigenvector of $A_i$.}  Defined explicitly in the table below are the values $r_i,$ $u_i,$ and $s_i$.  
\begin{widetext}
\begin{center}\label{tab:1}
\begin{tabular}{ |p{0.5cm}|p{10.8cm}|p{1.7cm}| }
 \hline
 \multicolumn{3}{|c|}{Eigenvectors} \\ \hline 
  & $r$ & Expression \\ \hline
 $u_1$ & $r_1=\frac{2-E(E-c_1)+sgn(2-E(E-c_1))\sqrt{(2-E(E-c_1))^2-4}}{2}$ & $\frac{1+r_1}{E-c_1}$ \\ \hline
 $s_1$ & $r_1=\frac{2-E(E-c_1)+sgn(2-E(E-c_1))\sqrt{(2-E(E-c_1))^2-4}}{2}$ & $\frac{1+r_1^{-1}}{E-c_1}$ \\ \hline
 $u_2$ & $r_2=\frac{2-E(E-c_1-\lambda_1)+sgn(2-E(E-c_1-\lambda_1))\sqrt{(2-E(E-c_1-\lambda_1))^2-4}}{2}$ & $\frac{1+r_2}{E-c_1-\lambda_1}$  \\ \hline
 $s_2$ & $r_2=\frac{2-E(E-c_1-\lambda_1)+sgn(2-E(E-c_1-\lambda_1))\sqrt{(2-E(E-c_1-\lambda_1))^2-4}}{2}$ & $\frac{1+r_2^{-1}}{E-c_1-\lambda_1}$ \\ \hline
 $u_3$ & $r_3=\frac{2-(E-\lambda_0)(E-c_1)+sgn(2-(E-\lambda_0)(E-c_1))\sqrt{(2-(E-\lambda_0)(E-c_1))^2-4}}{2}$  & $\frac{1+r_3}{E-c_1}$ \\ \hline
 $s_3$ & $r_3=\frac{2-(E-\lambda_0)(E-c_1)+sgn(2-(E-\lambda_0)(E-c_1))\sqrt{(2-(E-\lambda_0)(E-c_1))^2-4}}{2}$ &  $\frac{1+r_3^{-1}}{E-c_1}$ \\ \hline
 $u_4$ & $r_4=\frac{2-(E-\lambda_0)(E-c_1-\lambda_1)+sgn(2-(E-\lambda_0)(E-c_1-\lambda_1))\sqrt{(2-(E-\lambda_0)(E-c_1-\lambda_1))^2-4}}{2}$ & $\frac{1+r_4}{E-c_1-\lambda_1}$ \\ \hline
 $s_4$ & $r_4=\frac{2-(E-\lambda_0)(E-c_1-\lambda_1)+sgn(2-(E-\lambda_0)(E-c_1-\lambda_1))\sqrt{(2-(E-\lambda_0)(E-c_1-\lambda_1))^2-4}}{2}$ & $\frac{1+r_4^{-1}}{E-c_1-\lambda_1}$ \\ \hline
     \end{tabular} 
     
\captionof{table}{For every eigenvector, there is a corresponding $r$ value it depends on.}

\end{center}

\end{widetext}
\subsection{\label{sec:4B}Intervals Over $E$}

We will now find the intervals in $E$ where all four matrices are hyperbolic.  
\begin{prop}\label{prop:1}
Over $E$, there are at most 5 intervals over which \edited{every matrix of the set $\{A_i\}$} is hyperbolic.

\end{prop}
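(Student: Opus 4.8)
To prove Proposition \ref{prop:1}, the plan is to convert hyperbolicity of each $A_i$ into a statement about a single quadratic polynomial in $E$, and then to exploit the fact that the four relevant quadratics have all of their roots among only four real numbers.

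Since a matrix in $SL(2,\mathbb{R})$ is hyperbolic exactly when its trace has absolute value larger than $2$, I would first record, from the explicit forms of $A_1,\ldots,A_4$ above, that
\[
tr(A_1)=E(E-c_1)-2,\qquad tr(A_2)=E(E-\lambda_1-c_1)-2,
\]
\[
tr(A_3)=(E-\lambda_0)(E-c_1)-2,\qquad tr(A_4)=(E-\lambda_0)(E-\lambda_1-c_1)-2 .
\]
Thus $tr(A_i)=q_i(E)-2$, where $q_i$ is a monic quadratic in $E$ whose two roots are precisely the two potential values entering the product $A_i$ (one root from $\{0,\lambda_0\}$ and the other from $\{c_1,\lambda_1+c_1\}$). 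Consequently $A_i$ is hyperbolic if and only if $q_i(E)<0$ or $q_i(E)>4$, i.e. if and only if $q_i(E)\notin[0,4]$.

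Next I would order the four numbers $0,\lambda_0,c_1,\lambda_1+c_1$ as $p_1\le p_2\le p_3\le p_4$ and partition $\mathbb{R}$ into the at most five closed panels $(-\infty,p_1]$, $[p_1,p_2]$, $[p_2,p_3]$, $[p_3,p_4]$, $[p_4,\infty)$. The key observation is that every root of every $q_i$ is one of the $p_j$, so no $q_i$ vanishes in the interior of any panel. Hence, restricted to a fixed panel, each $q_i$ is of exactly one of two types: either both of its roots lie on one side of the panel, in which case $q_i\ge 0$ on the panel and the midpoint of its roots (its vertex) is not in the open panel, so $q_i$ is monotone there and the set of $E$ in that panel with $q_i(E)>4$ is an initial or terminal subinterval of the panel; or the two roots straddle the panel, in which case $q_i\le 0$ throughout the panel and the set of $E$ with $q_i(E)<0$ contains the whole open panel. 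Either way, the set of energies in that panel at which $A_i$ is hyperbolic is a single subinterval of the panel.

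Intersecting over $i=1,2,3,4$, the locus where all four matrices are hyperbolic meets each panel in an intersection of at most one initial subinterval, at most one terminal subinterval, and sets containing the open panel; this is again a single (possibly empty) interval. Taking the union over the at most five panels then yields at most five intervals, which is the assertion. The only care needed is the bookkeeping at the shared endpoints $p_j$ and in degenerate configurations where some of the four numbers coincide (so there are fewer panels, or some $q_i$ has a double root); in all such cases the count only goes down. I do not expect a genuine obstacle here: the argument hinges entirely on the single structural fact that the roots of all four quadratics are confined to the same four points, which forces the monotonicity structure panel by panel, and the rest is an elementary interval intersection.
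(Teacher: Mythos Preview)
Your argument is correct and is a genuinely different route from the paper's. The paper proves the proposition by brute force: it fixes one of the three possible orderings of $\{\lambda_0,c_1,c_1+\lambda_1\}$ (with $0$ always smallest), and for each ordering writes down explicitly the five defining conditions (labeled $a$--$e$) under which all four trace conditions $|tr(A_i)|>2$ hold simultaneously. No structural reason for the number five is given; it simply falls out of the enumeration.

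Your approach, by contrast, identifies the mechanism: the four monic quadratics $q_i$ have all eight of their roots confined to the four points $\{0,\lambda_0,c_1,\lambda_1+c_1\}$, so on each of the at most five panels determined by these points every $q_i$ is either nonpositive throughout or monotone, and in either case $\{q_i\notin[0,4]\}$ meets the panel in a single subinterval. Since an intersection of intervals in $\mathbb{R}$ is again an interval, the common hyperbolicity locus contributes at most one interval per panel. One small wording issue: your phrase ``at most one initial subinterval, at most one terminal subinterval'' is not literally accurate (several $q_i$ can contribute terminal subintervals on the same panel), but since any intersection of initial subintervals is again initial, and likewise for terminal, the conclusion is unaffected.

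What each approach buys: yours is cleaner, ordering-independent, and explains \emph{why} the bound is five rather than, say, the naive $3^4$ from intersecting four sets each with three components. The paper's explicit case list, on the other hand, produces the actual endpoint formulas that are reused downstream (Section~\ref{sec:4D} and the explicit gap formulas in Section~\ref{sec:4E}), so its extra labor is not wasted.
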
 

\edited{The set $\{E:A_i\text{ is hyperbolic for all }i\}$ is the intersection of open intervals defined by } 
\begin{equation}\bigcap_{i=1}^4\{E:|tr(A_i)|>2\},\end{equation}  \edited{and this lemma can be proven by explicitly calculating the intervals over $E$ where the statement $|tr(A_i)|>2$ is true for all $i$}. These intervals depend on the 3 possible orderings of the elements of the set $\{\lambda_0,c_1,c_1+\lambda_1\}.$ \edited{These orderings are listed below, and within the list are sublists, defining the intervals that make up the set $\{E:A_i\text{ is hyperbolic for all } i\}.$   Here, we are assuming $c_0=0,c_1\geq0,\lambda_0\geq0,\lambda_1\geq 0$,  as mentioned in section \hyperref[sec:4A]{$4A$}.} 
%\edited{The lists also have a table that shows} the principal cone in the projective space.  \edited{We define the tables} by providing \edited{the} upper and lower bounds of the unstable eigenvectors grouped together, as well as the upper and lower bounds of the stable eigenvectors grouped together.  The details are proven in subsection \edited{4D}. 

\begin{enumerate}\item If we assume $ \lambda_0\leq c_1\leq c_1+\lambda_1,$ \edited{then the list of intervals making up $\{E:A_i\text{ is hyperbolic for all } i\}$ are below.}
 \end{enumerate} 

\begin{enumerate}
\centering
\item[a)]  $E<0$ and $E(E-c_1)>4$

\item[b)] $0<E<\lambda_0$ and $ (E-\lambda_0)(E-c_1)>4$
\item[c)] $\lambda_0<E<c_1$
\item[d)] $c_1<E<c_1+\lambda_1$ and  $(E-\lambda_0)(E-c_1)>4 $
\item[e)]  $E>c_1+\lambda_1$ and $(E-\lambda_0)(E-\lambda_1-c_1)>4$ 
\end{enumerate}

%\begin{center}\begin{table}[H]\label{tab:2}\centering
%\begin{tabular}{ |p{1.6cm}||p{3.2cm}|p{3.2cm}|p{3.2cm}|p{3.2cm}|}
% \hline
% \multicolumn{5}{|c|}{$\lambda_0\leq c_1$} \\
% \hline
% Interval & lower bound of  unstable eigenvectors & upper bound of unstable eigenvectors& lower bound of stable eigenvectors & upper bound of stable eigenvectors\\
% \hline Scenario a & $u_4$ & $u_1$ & $s_1$ &$s_4$ \\ \hline
% Scenario b &  $u_2$  &    $u_3$ &  $ s_3$ & $ s_2$\\ \hline
% Scenario c  & $u_4$  &  $u_1$   & $s_1$  &    $ s_4$ \\ \hline

% Scenario d & $u_3$   & $ u_2$  & $ s_2$  &   $s_3$   \\ \hline
% Scenario e    & $u_4 $ & $ u_1 $ &  $ s_1$  & $ s_4$\\

% \hline
%\end{tabular}  \caption{With $\lambda_0\leq c_1,$ the bounds of the disjoint intervals in $\mathbb{P}$ containing \\ only stable or only unstable eigenvectors.}\end{table}\end{center}

\begin{enumerate} 
\item[2.] If $c_1< \lambda_0<c_1+\lambda_1,$ \edited{then $\{E:A_i\text{ is hyperbolic for all } i\}$ is the union of intervals listed below.}
\begin{enumerate}\centering
\item[a)] $E<0$ and $E(E-c_1)>4$
\item[b)] $0<E<c_1$ and $(E-\lambda_0)(E-c_1)>4$
\item[c)]$c_1<E<\lambda_0$ and $E(E-c_1)>4$ and $(E-\lambda_0)(E-\lambda_1-c_1)>4$
\item[d)]  $\lambda_0<E<c_1+\lambda_1$ and $(E-\lambda_0)(E-c_1)>4$
\item[e)] $E>c_1+\lambda_1$ and $(E-\lambda_0)(E-\lambda_1-c_1)>4$
\end{enumerate} \end{enumerate}  
%\begin{table}[H]\centering\label{tab:3}\begin{tabular}{ |p{1.6cm}||p{3.2cm}|p{3.2cm}|p{3.2cm}|p{3.2cm}|}
% \hline
% \multicolumn{5}{|c|}{$c_1\leq \lambda_0<c_1+\lambda_1$} \\
% \hline
%  Interval & lower bound of unstable eigenvectors & upper bound of unstable eigenvectors& lower bound of stable eigenvectors & upper bound of stable eigenvectors \\
% \hline Scenario a & $u_4$ & $u_1$ & $s_1$ &$s_4$ \\ \hline
% Scenario b &   $u_2 $  & $u_3$   &   $s_3$ &  $s_2$\\ \hline
% Scenario c & $u_1$ & $u_4$ & $s_4$ &   $s_1$\\ \hline
% Scenario d & $u_3$   & $u_2$  & $s_2$  &   $s_3$    \\ \hline
% Scenario e    & $u_4$ & $u_1$ &  $ s_1$  & $ s_4$ \\
% \hline
%\end{tabular}\caption{With $c_1\leq \lambda_0< c_1+\lambda_1,$ the bounds of the disjoint intervals in $\mathbb{P}$ containing \\ only stable or only unstable eigenvectors.}\end{table}

\begin{enumerate}
\item[3.] If $c_1\leq c_1+\lambda_1\leq \lambda_0,$ \edited{the the list of intervals over which the matrices are hyperbolic are listed below.}
\begin{enumerate}\centering
    \item[a)]  $E<0$ and $E(E-c_1)>4$
    \item[b)] $0<E<c_1$ and  $(E-\lambda_0)(E-c_1)>4$
    \item[c)] $c_1<E<c_1+\lambda_1$ and $(E-\lambda_0)(E-c_1)>4$ and $E(E-c_1)>4$
    \item[d)] If $c_1+\lambda_1<E<\lambda_0$ and $E(E-\lambda_1-c_1)>4$ 
    \item[e)] If $E>\lambda_0$ and $(E-\lambda_0)(E-\lambda_1-c_1)>4$ 
\end{enumerate}\end{enumerate}
\edited{It is worth noting that these arent the explicit intervals but just defining conditions.  Depending on the constants, some of these conditions are not met for any $E$.  In the case of the trivial example, where $c_1=\lambda_0=\lambda_1=0$ (looking at either ordering 1 or ordering 3), only conditions $a$ and $e$ are possible. The explicit intervals are defined in section \hyperref[sec:4E]{$4E$}.}
%\begin{center}\begin{table}[H]\label{tab:4}\centering\begin{tabular}{ |p{1.6cm}||p{3.2cm}|p{3.2cm}|p{3.2cm}|p{3.2cm}|}
% \hline
% \multicolumn{5}{|c|}{$c_1+\lambda_1\leq \lambda_0$} \\
% \hline
% Interval & lower bound of unstable eigenvectors & upper bound of unstable eigenvectors& lower bound of stable eigenvectors & upper bound of stable eigenvectors\\
% \hline
% \hline Scenario a & $ u_4 $ & $ u_1 $ & $ s_1 $ & $  s_4    $\\ \hline
% Scenario b & $u_2$  & $u_3$   &   $s_3$ &  $s_2$\\ \hline

% Scenario c  &   $u_1$ &  $u_4 $ & $s_4$ &   $u_1$\\ \hline

% Scenario d & $ u_2$   & $u_3 $  & $s_3$  &   $ s_2$  \\ \hline
% Scenario e & $u_4$ & $ u_1$ &  $ s_1 $  & $ s_4$  \\
% \hline
%\end{tabular}

%\caption{With $c_1+\lambda_1\leq \lambda_0,$ the bounds of the disjoint intervals in $\mathbb{P}$ containing\\ only stable or only unstable eigenvectors. } 
%\end{table}\end{center}

\subsection{Eigenvector Combinatorics}
\edited{Before we can prove lemma \hyperref[lem:2]{$2$}}, we need some technical lemmas which can be used to locate the eigenvectors in $\mathbb{P}^1$ in relation to each other.

\begin{lemma}\label{lem:5}
Given distinct transfer matrices $A,B,C$, if the matrices $AB$ and $AC$ are hyperbolic, then they cannot share an eigenvector.  Similarly, $BA$ and $CA$ cannot share an eigenvector if they are hyperbolic.
\end{lemma}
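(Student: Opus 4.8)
The plan is to exploit the rigid structure of transfer matrices. Any two distinct transfer matrices $B$ and $C$ agree outside their $(1,1)$-entry, so writing $B=\begin{bmatrix} b & -1 \\ 1 & 0\end{bmatrix}$ and $C=\begin{bmatrix} c & -1 \\ 1 & 0\end{bmatrix}$ we have $b\ne c$ (only $B\ne C$ will actually be used), and a direct computation gives
\[C^{-1}B=\begin{bmatrix} 1 & 0 \\ c-b & 1\end{bmatrix},\]
which is a non-identity parabolic matrix. Its characteristic polynomial is $(\lambda-1)^2$ and, since it is not the identity, its only invariant one-dimensional subspace is $\ker(C^{-1}B-I)=\operatorname{span}\begin{bmatrix} 0\\1\end{bmatrix}$; equivalently $\begin{bmatrix} 0\\1\end{bmatrix}$ is its unique eigenvector, with eigenvalue $1$.

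For the first assertion I would argue by contradiction: suppose $AB$ and $AC$ are hyperbolic and share an eigenvector $\vec v$, say $AB\vec v=\mu\vec v$ and $AC\vec v=\nu\vec v$, where $\mu,\nu\ne0$ since $AB,AC\in SL(2,\mathbb R)$. From $AB\vec v=\tfrac{\mu}{\nu}\,AC\vec v$ I cancel $A$ and then $C$ to get $C^{-1}B\vec v=\tfrac{\mu}{\nu}\vec v$, so $\vec v$ is an eigenvector of $C^{-1}B$. By the observation above this forces $\vec v\propto\begin{bmatrix} 0\\1\end{bmatrix}$ and $\mu=\nu$.

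It then remains to rule out $\begin{bmatrix} 0\\1\end{bmatrix}$ being an eigenvector of a hyperbolic $AB$. Taking this representative, $B\begin{bmatrix} 0\\1\end{bmatrix}=\begin{bmatrix} -1\\0\end{bmatrix}$ and $A\begin{bmatrix} -1\\0\end{bmatrix}=\begin{bmatrix} -A_{11}\\-1\end{bmatrix}$; comparing with $\mu\begin{bmatrix} 0\\1\end{bmatrix}$ forces $A_{11}=0$ and $\mu=-1$. But an element of $SL(2,\mathbb R)$ with $-1$ as an eigenvalue has both eigenvalues equal to $-1$, hence trace $-2$, contradicting the hyperbolicity of $AB$ (for which $|\operatorname{tr}(AB)|>2$). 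The claim for $BA$ and $CA$ runs through the same template: a shared eigenvector $\vec v$ gives $C^{-1}B(A\vec v)=\tfrac{\mu}{\nu}(A\vec v)$, hence $A\vec v\propto\begin{bmatrix} 0\\1\end{bmatrix}$ and $\mu=\nu$, so $\vec v\propto A^{-1}\begin{bmatrix} 0\\1\end{bmatrix}=\begin{bmatrix} 1\\A_{11}\end{bmatrix}$; then $BA\vec v=B\begin{bmatrix} 0\\1\end{bmatrix}=\begin{bmatrix} -1\\0\end{bmatrix}$ must be proportional to $\begin{bmatrix} 1\\A_{11}\end{bmatrix}$, again forcing $A_{11}=0$ and the eigenvalue $-1$, contradicting hyperbolicity of $BA$.

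The computations are all elementary $2\times2$ arithmetic, so there is no serious obstacle; the one point to watch is the eigenvalue bookkeeping — one must check $\mu,\nu\ne0$ before forming the ratio, and it is precisely the fact that the parabolic $C^{-1}B$ has $1$ as its only eigenvalue that delivers $\mu=\nu$. Notably the argument uses nothing beyond the explicit form of a transfer matrix and the trace criterion $|\operatorname{tr}|>2$ for hyperbolicity; Lemma \ref{lem:4} is not needed.
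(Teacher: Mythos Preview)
Your proof is correct and follows essentially the same route as the paper's: both reduce the shared-eigenvector hypothesis to the relation $B\vec v=\xi C\vec v$ (equivalently $C^{-1}B\vec v=\xi\vec v$), use the special structure of transfer matrices to force $\xi=1$ and $\vec v\propto\begin{bmatrix}0\\1\end{bmatrix}$, and then check that this forces $A_{11}=0$ and eigenvalue $-1$, contradicting hyperbolicity. The only cosmetic difference is that you package the key step as ``$C^{-1}B$ is a non-identity parabolic with unique eigendirection $\begin{bmatrix}0\\1\end{bmatrix}$'' while the paper computes $\det(B-\xi C)=(1-\xi)^2$ directly; these are the same observation.
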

\begin{proof}\
\begin{itemize}
\item \edited{To prove the first statement}, given matrices $AB$ and $AC$, assume they share an eigenvector $\vec{v}$.  Then
\[AB\vec{v}=\xi_1 AC\vec{v}=\xi_2 \vec{v}\]
\edited{for constants $\xi_1$ and $\xi_2$.  Therefore \[B\vec{v}=\xi_1 C\vec{v}\Rightarrow (B-\xi_1C)\vec{v}=0\]}
\edited{$det(B-\xi C)=(1-\xi_1)^2$.  For this to be zero, $\xi_1=1.$  This makes $(B-C)\vec{v}=\begin{bmatrix}B_{11}-C_{11} & 0 \\ 0 & 0 \end{bmatrix}\cdot \vec{v}=0$ in which case $\vec{v}\propto\begin{bmatrix} 0\\1\end{bmatrix}$. For this to be an eigenvalue of $AB,$ we get the equation \[AB\cdot\begin{bmatrix} 0 \\1\end{bmatrix}=A\cdot\begin{bmatrix} -1 \\0\end{bmatrix}=\xi_2\begin{bmatrix}0 \\ 1\end{bmatrix},\] which gives the equations $A=\begin{bmatrix} 0 & -1 \\ 1 & 0\end{bmatrix}$ and $\xi_2=-1$.  This makes $AB$ not hyperbolic.}

\edited{\item To prove the second statement, assume matrices $BA$ and $CA$ which share an eigenvector $\vec{v}$. \[BA\vec{v}=\xi_1CA\vec{v}=\xi_2 \vec{v}\] Therefore, \[B(A\vec{v})=\xi_1C(A\vec{v})\Rightarrow(B-\xi_1 C)(A\vec{v})=0.\]}  This gives the equation $A\vec{v}\propto\begin{bmatrix}0 \\ 1\end{bmatrix}$, which implies the equation $\xi_2\vec{v}=B\left(A\vec{v}\right)\propto B\begin{bmatrix}0 \\ 1 \end{bmatrix}\propto\begin{bmatrix}-1 \\ 0\end{bmatrix}$.  Given the statement $A\vec{v}\propto\begin{bmatrix} 0 \\ 1\end{bmatrix},$ then the equations $A=\begin{bmatrix} 0 & -1 \\ 1 & 0\end{bmatrix},$ and $\xi_2=\xi_1=1$ are true, making the product parabolic and proves the lemma.

\end{itemize}
\end{proof}

Recognizing that the matrices $\{A_i\}$ are products of transfer matrices, we can use this lemma to deduce some properties about the unstable and stable matrices.  \begin{cor}\label{cor:1}
Assuming all the matrices in $\{A_i\}$ are hyperbolic and using \hyperref[eq:7]{$(7)$}, if $A_i$ and $A_j$ share an eigenvector (with $i< j$), then $(i,j)\in\{(1,4),(2,3)\}$.
\end{cor}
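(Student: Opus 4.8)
The plan is to deduce the corollary from \hyperref[lem:5]{Lemma \ref{lem:5}} by running through the $\binom{4}{2}=6$ pairs $(i,j)$ with $1\le i<j\le 4$. Using the identification $A_1=AC$, $A_2=AD$, $A_3=BC$, $A_4=BD$ from \hyperref[eq:7]{$(7)$}, I would sort the pairs according to whether the two products share a common transfer-matrix factor in the same slot. Four of the pairs do: $A_1,A_2$ and $A_3,A_4$ share a common \emph{left} factor ($A$ and $B$ respectively), while $A_1,A_3$ and $A_2,A_4$ share a common \emph{right} factor ($C$ and $D$ respectively). The two remaining pairs, $(1,4)$ and $(2,3)$, are $\{AC,BD\}$ and $\{AD,BC\}$, and these have no factor in common.

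For the four ``common-factor'' pairs I would invoke \hyperref[lem:5]{Lemma \ref{lem:5}} directly. For $(1,2)$, apply the first assertion of the lemma with the common left factor $A$ and the remaining factors $C,D$; since all $A_i$ are hyperbolic by hypothesis, $A_1=AC$ and $A_2=AD$ cannot share an eigenvector. The same argument with left factor $B$ handles $(3,4)$. For $(1,3)$ and $(2,4)$ I would use the ``similarly'' (common \emph{right} factor) half of \hyperref[lem:5]{Lemma \ref{lem:5}}: with right factor $C$ and remaining factors $A,B$ for $(1,3)$, and with right factor $D$ for $(2,4)$; in each case hyperbolicity forbids a shared eigenvector. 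This eliminates the four pairs $(1,2),(1,3),(2,4),(3,4)$, so a shared eigenvector can occur only for $(i,j)\in\{(1,4),(2,3)\}$, which is the assertion.

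The only point that needs care is the hypothesis of \hyperref[lem:5]{Lemma \ref{lem:5}} that the transfer matrices involved be distinct, so I would run the above argument under the standing assumption that $A,B,C,D$ are pairwise distinct (equivalently, $\lambda_0\neq 0$, $\lambda_1\neq 0$, together with nondegeneracy of the constant parts); in particular the two non-shared factors used in each application, $\{C,D\}$ or $\{A,B\}$, are then genuinely distinct. The few residual configurations, where $\lambda_0=0$ or $\lambda_1=0$ (so that some of the $A_i$ coincide) or where a constant part degenerates, either make the statement vacuous for the collapsed indices or reduce to the elementary fact that two distinct transfer matrices never share an eigenvector, and are in any case subsumed by the explicit computation of Section \hyperref[sec:4E]{$4E$}. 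Beyond this bookkeeping I do not anticipate any real difficulty: the corollary is a purely combinatorial consequence of \hyperref[lem:5]{Lemma \ref{lem:5}}, with no estimates involved, so the ``main obstacle'' is simply keeping the degenerate cases honest.
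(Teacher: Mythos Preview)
Your proposal is correct and matches the paper's intended argument: the paper states Corollary~\ref{cor:1} immediately after Lemma~\ref{lem:5} without a separate proof, treating it as the obvious case-check you carry out. Your attention to the distinctness hypothesis (handling $\lambda_0=0$ or $\lambda_1=0$ separately) is more careful than the paper itself, which tacitly works in the nondegenerate regime.
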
 

Over the intervals where the matrices are hyperbolic, the functions $u_i(E)$ and $s_i(E)$ in the projective space are well-defined and continuous over intervals of $E$.  Below we have the unstable eigenvector (red) and stable eigenvector (blue) of matrix $A_1$ in $\mathbb{P}^1$ as a function of $E$ to help illustrate what the graph can look like.

\begin{figure}[H]\label{fig:1}\centering
\includegraphics[width=6.9cm, height=4.8cm]{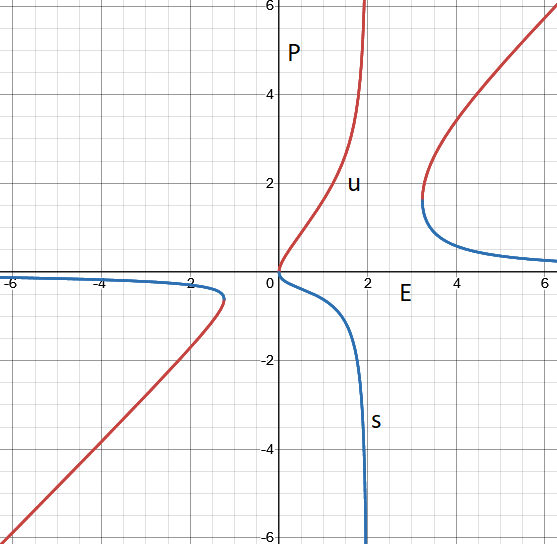}
\caption{Eigenvectors of $A_1$.  Here $c_0=0$ }
\end{figure}  

Next we have graphed the eigenvectors of matrices $A_1,\;A_2,\;A_3\;\&\;A_4$. \edited{ The matrices $A_2,A_3,\;\&\;A_4$ can be seen as perturbed versions of $A_1.$ If any of the graphs intersect they are eigenvectors of $A_2$ and $A_3$, or they are eigenvectors of $A_1$ and $A_4$.}

\begin{figure}[H]
\centering
\label{fig:2}\includegraphics[width=6.9cm, height=4.8cm]{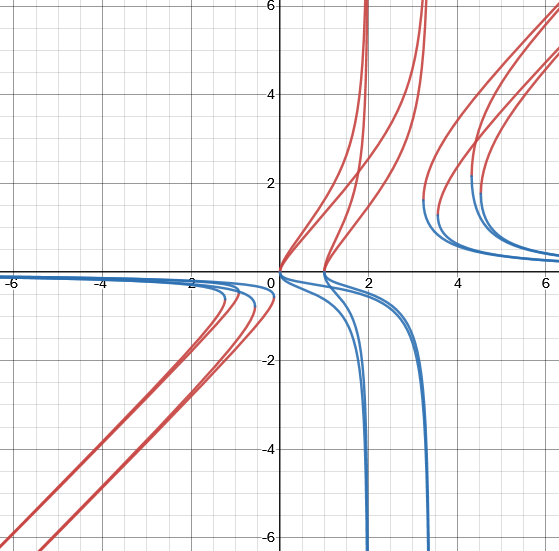} \caption{Eigenvectors of $A_1$ to $A_4$.   $\lambda_0=1,\;\lambda_1=1.4,\;c_0=0,$ and $c_1=2$}
\end{figure}

 \edited{Only a few of the eigenvector graphs can intersect, while others bound each other over intervals of $E$.  This brings us to the lemma below.}
\color{black}\begin{lemma}\label{lem:6} \
\begin{itemize} 
\item Define open interval $J$ over $E$ such that  $A_1(E)$ and $A_2(E,\lambda_1)$ are hyperbolic, and there exists $\lambda_1^*>0$ such that $A_2(E,\lambda_1)$ is hyperbolic for all $\lambda_1\in [0,\lambda_1^*]$.  Over $J$, the eigenvector $u_2$ (resp $s_2$) is greater than $u_1$ (resp $s_1$) if and only if  $\partial u_2 /\partial \lambda_1|_{\lambda_1=0}\geq0$ (resp $\partial s_2/\partial \lambda_1|_{\lambda_1=0}\geq0$).  
\item  Define open interval $J$ over $E$ such that  $A_3(E)$ and $A_4(E,\lambda_1)$ are hyperbolic, and there exists $\lambda_1^*>0$ such that $A_4(E,\lambda_1)$ is hyperbolic for all $\lambda_1\in [0,\lambda_1^*]$.  Over $J$ the eigenvector $u_4$ (resp $s_4$) is greater than $u_3$ (resp $s_3$) if and only if $\partial u_4 /\partial \lambda_1|_{\lambda_1=0}\geq0$ (resp $\partial s_4/\partial \lambda_1|_{\lambda_1=0}\geq0$). 
\item Define open interval $J$ over $E$ such that  $A_1(E)$ and $A_3(E,\lambda_0)$ are hyperbolic, and there exists $\lambda_0^*>0$ such that $A_3(E,\lambda_0)$ is hyperbolic for all $\lambda_0\in [0,\lambda_0^*]$.  Over $J$ the eigenvector $u_3$ (resp $s_3$) is greater than $u_1$ (resp $s_1$) if and only if $\partial u_3 /\partial \lambda_0|_{\lambda_0=0}\geq0$ (resp $\partial s_3 /\partial \lambda_0|_{\lambda_0=0}\geq0$).    
\item Define open interval $J$ over $E$ such that  $A_2(E)$ and $A_4(E,\lambda_0)$ are hyperbolic, and there exists $\lambda_0^*>0$ such that $A_4(E,\lambda_0)$ is hyperbolic for all $\lambda_0\in [0,\lambda_0^*]$.  Over $J$ the eigenvector $u_4$ (resp $s_4$) is greater than $u_2$ (resp $s_2$) if and only if $\partial u_4 /\partial \lambda_0|_{\lambda_0=0}\geq0$ (resp $\partial s_4 /\partial \lambda_0|_{\lambda_0=0}\geq0$).  \end{itemize}

\end{lemma}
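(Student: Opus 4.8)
The plan is to prove all four bullet points by one and the same argument, reading off the other three by relabeling; I will describe the first, where $A_1$ is deformed into $A_2$ by turning on $\lambda_1$, i.e.\ by replacing the right factor $C$ by $D$. The first thing I would record is the elementary fact that at $\lambda_1=0$ we have $D=C$, hence $A_2(E,0)=AC=A_1(E)$, so $u_2(E,0)=u_1(E)$ and $s_2(E,0)=s_1(E)$; the analogous specializations $A_4|_{\lambda_1=0}=A_3$, $A_3|_{\lambda_0=0}=A_1$, $A_4|_{\lambda_0=0}=A_2$ take care of the other three bullets. Thus for fixed $E\in J$ the curve $\lambda_1\mapsto u_2(E,\lambda_1)$ starts at $u_1(E)$, and what must be controlled is the sign of $u_2(E,\lambda_1)-u_1(E)$.

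The key reduction is to a monotonicity statement: for fixed $E\in J$, $u_2(E,\cdot)$ is strictly monotone in $\lambda_1$ along the connected component of $\{\lambda_1\ge 0:A_2(E,\lambda_1)\text{ hyperbolic}\}$ containing $0$; by the $\lambda_1^\ast$-hypothesis this component contains the whole segment $[0,\lambda_1^\ast]$ relevant to the lemma. Granting monotonicity, the one-sided derivative $\partial u_2/\partial\lambda_1|_{\lambda_1=0}$ is nonzero, its sign is the sign of monotonicity, and hence: if it is $\ge 0$ (so $>0$) then $u_2$ increases off $u_1(E)$ and $u_2>u_1$; if it is $<0$ then $u_2<u_1$. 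This is the asserted biconditional at a single $E$; to see the dichotomy does not depend on $E\in J$ (so the phrasing ``over $J$'' is legitimate) I would note that $E\mapsto \partial u_2/\partial\lambda_1|_{\lambda_1=0}$ is continuous and nowhere zero on the interval $J$, hence of constant sign there. The stable statement is identical once one observes (below) that $s_2$ satisfies the same quadratic, so the same reasoning applies with its own derivative $\partial s_2/\partial\lambda_1$.

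The heart of the matter is the monotonicity claim, which I would get by implicit differentiation of the fixed-point equation of the M\"obius action. Setting $w=E-c_1-\lambda_1=(A_2)_{21}$, a short computation (or Lemma~\ref{lem:4} and Table~\ref{tab:1}) shows that $u_2$ and $s_2$ are the two roots of $wz^2-Ewz+E=0$, i.e.\ of $z^2-Ez+E/w=0$ (here $w\ne0$ on the hyperbolic locus, since $w=0$ makes $A_2$ parabolic). On a connected hyperbolic component the discriminant $E^2-4E/w=\operatorname{tr}(A_2)^2/w^2-4/w^2$ is strictly positive, so $u_2$ stays on one branch $z=\tfrac12\bigl(E+\varepsilon\sqrt{E^2-4E/w}\bigr)$, with $\varepsilon=\operatorname{sgn}\bigl((Ew-2)\,w\bigr)\in\{\pm1\}$ constant there (a branch switch would force a double root, i.e.\ a parabolic boundary point). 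Since only the constant term of $z^2-Ez+E/w$ depends on $\lambda_1$ (via $w$, with $\partial w/\partial\lambda_1=-1$), implicit differentiation gives
\[
\frac{\partial u_2}{\partial\lambda_1}=-\frac{\partial_{\lambda_1}(E/w)}{2u_2-E}=-\frac{E/w^2}{\varepsilon\sqrt{E^2-4E/w}}=-\frac{\varepsilon\,E}{w^2\sqrt{E^2-4E/w}},
\]
whose sign is $-\varepsilon\operatorname{sgn}(E)$ and hence constant on the component; for $s_2=E-u_2$ (Vieta) one gets $\partial s_2/\partial\lambda_1=-\partial u_2/\partial\lambda_1$, again one-signed. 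Finally, $E=0$ and $E=c_1$ both make $A_1$ parabolic ($\operatorname{tr}=-2$), so neither lies in the open interval $J$; this simultaneously guarantees the derivative is genuinely nonzero on $J$ (so ``$\ge0$'' and ``$>0$'' coincide) and that $\operatorname{sgn}(E)$ and $\varepsilon|_{\lambda_1=0}=\operatorname{sgn}\bigl((E(E-c_1)-2)(E-c_1)\bigr)$ are constant on $J$. For the $\lambda_0$-bullets one runs the same computation with the left factor varying: the fixed-point equation becomes $z^2-(E-\lambda_0)z+(E-\lambda_0)/(E-c_1)=0$ (resp.\ with $E-c_1$ replaced by $E-c_1-\lambda_1$), only the coefficient pattern changes, and the parabolic energies to be excluded are $E=\lambda_0$ and $E=c_1$ (resp.\ $E=c_1+\lambda_1$), which again lie on $\partial J$.

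I expect the main obstacle to be the bookkeeping around the branch sign $\varepsilon$: proving that $u_2$ cannot jump from one root of the quadratic to the other along a connected hyperbolic path, and checking that the finitely many energies where the relevant signs degenerate are exactly the parabolic points already excluded from the open interval $J$ (equivalently, already sitting on its boundary). Once that is pinned down the displayed formula makes the monotonicity, and hence the lemma, immediate. As a consistency check one may also invoke Lemma~\ref{lem:5}: $A_1=AC$ and $A_2=AD$ are hyperbolic with $C\ne D$ whenever $\lambda_1>0$, so they share no eigenvector and $u_2\ne u_1$, in agreement with the strict monotonicity.
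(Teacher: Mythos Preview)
Your argument is correct and in fact somewhat cleaner than the paper's. Both proofs begin with the same observation: at $\lambda_1=0$ the two matrices coincide, so $u_2(E,0)=u_1(E)$, and the task is to determine the sign of $u_2-u_1$ for $\lambda_1>0$. From there the two diverge. The paper invokes Corollary~\ref{cor:1} (a direct consequence of Lemma~\ref{lem:5}) to assert that for every $\lambda_1\in(0,\lambda_1^\ast]$ the hyperbolic pair $A_1,A_2$ shares no eigenvector, hence $u_2-u_1$ never vanishes and by continuity keeps the sign dictated by $\partial u_2/\partial\lambda_1|_{\lambda_1=0}$; the explicit derivatives are then tabulated (Table~2) for use in Section~\ref{sec:4D}. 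You instead compute $\partial u_2/\partial\lambda_1$ for all $\lambda_1$ via implicit differentiation of the fixed-point quadratic and show it has constant sign on the connected hyperbolic component, i.e.\ strict monotonicity. Your route is self-contained (Lemma~\ref{lem:5} becomes a consistency check rather than an input) and yields the slightly stronger monotonicity statement; the paper's route is shorter once Corollary~\ref{cor:1} is in hand.

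One remark on the $\lambda_0$ bullets, which you only sketch: there both the linear and constant coefficients of the fixed-point quadratic $z^2-(E-\lambda_0)z+(E-\lambda_0)/(E-c_1)=0$ depend on $\lambda_0$, so the implicit-differentiation formula changes shape. Carrying it through with $u_3=(1+r_3)/(E-c_1)$ gives $\partial u_3/\partial\lambda_0=-r_3^2/(r_3^2-1)<0$ and $\partial s_3/\partial\lambda_0=1/(r_3^2-1)>0$ identically on the hyperbolic locus, so the monotonicity is even more immediate there (and matches the paper's Table~2). Your phrase ``only the coefficient pattern changes'' is accurate, but it is worth recording that the sign analysis in that case does not need the branch-bookkeeping around $\varepsilon$ at all.
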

\begin{proof} \

\begin{itemize}

    \item If $\lambda_1=0,$ then $A_1=A_2.$  Over any interval $J$ where both $A_1$ and $A_2$ are hyperbolic, both  $\partial u_2/\partial \lambda_1|_{\lambda_1=0}$ and $\partial s_2/\partial \lambda_1|_{\lambda_1=0}$ are well-defined and not constant over $J$.  Because of corollary 1, $sign(u_2-u_1)$ and $sign(s_2-s_1)$ are well-defined and fixed for all $\lambda_1\in(0,\lambda_1^*]$ over the interval $J$.
   
    \item If $\lambda_1=0,$ then $A_3=A_4.$  Over any interval $J$ where both $A_3$ and $A_4$ are hyperbolic, both  $\partial u_4/\partial \lambda_1|_{\lambda_1=0}$ and $\partial s_4/\partial \lambda_1|_{\lambda_1=0}$ are well-defined and not constant over $J$.  Because of corollary 1, $sign(u_4-u_3)$ and $sign(s_4-s_3)$ are well-defined and fixed for all $\lambda_1\in(0,\lambda_1^*)$ over $J$.
    \item If $\lambda_0=0,$ then $A_1=A_3.$  Over any interval $J$ where both $A_1$ and $A_3$ are hyperbolic, both  $\partial u_3/\partial \lambda_0|_{\lambda_0=0}$ and $\partial s_3/\partial \lambda_0|_{\lambda_0=0}$ are well-defined and not constant over $J$.  Because of corollary 1, $sign(u_3-u_1)$ and $sign(s_3-s_1)$ are well-defined and fixed for all $\lambda_0\in(0,\lambda_0^*)$ over $J$.
    \item If $\lambda_0=0,$ then $A_4=A_2.$  Over any interval where both $A_2$ and $A_4$ are hyperbolic, both  $\partial u_4/\partial \lambda_0|_{\lambda_0=0}$ and $\partial s_4/\partial \lambda_0|_{\lambda_0=0}$ are well-defined and not constant over $J$.  Because of corollary 1, $sign(u_4-u_2)$ and $sign(s_4-s_2)$ are well-defined and fixed for all $\lambda_0\in(0,\lambda_0^*)$ and over the interval.
\end{itemize}

From \edited{table \hyperref[tab:1]{$1$} in section \hyperref[sec:4A]{$4A$}} we have explicit expressions of $u_i$, and the partial derivatives are explicitly \edited{ defined in the table below.}

\begin{center}\begin{table}[H]\centering\label{tab:5}
\begin{tabular}{ |p{0.5cm}||p{3cm}|p{4cm}|}
 \hline
 \multicolumn{3}{|c|}{Partial Derivatives} \\
 \hline
  
  & $\partial/\partial\lambda_0$ & $\partial/\partial\lambda_1$ \\ \hline
 $u_1$ & 0 & 0 \\ \hline
 $s_1$ & 0 & 0 \\ \hline
 $u_2$ & 0 &  $\frac{1+(E-c_1-\lambda_1)(\partial r)/(\partial \lambda_1)}{(E-c_1-\lambda_1)^2}$  \\ \hline
 $s_2$ & 0 & $\frac{1-r^{-2}(E-c_1-\lambda_1)(\partial r)/(\partial \lambda_1)}{(E-c_1-\lambda_1)^2}$  \\ \hline
 $u_3$ & $\frac{(\partial r)/(\partial \lambda_0)}{E-c_1}$  & 0 \\ \hline
 $s_3$ &  $-\frac{(\partial r)/(\partial \lambda_0)}{r^2(E-c_1)}$    & 0 \\ \hline
 $u_4$  & $\frac{(\partial r)/(\partial \lambda_0)}{E-c_1-\lambda_1}$ &  $\frac{1+(E-c_1-\lambda_1)(\partial r)/(\partial \lambda_1)}{(E-c_1-\lambda_1)^2}$  \\ \hline
 $s_4$  &  $-\frac{(\partial r)/(\partial \lambda_0)}{r^2(E-c_1-\lambda_1)}$   &  $\frac{1-r^{-2}(E-c_1-\lambda_1)(\partial r)/(\partial \lambda_1)}{(E-c_1-\lambda_1)^2}$ \\ \hline
\end{tabular}\caption{In $\mathbb{P}$, the partial derivatives of the eigenvectors are defined here.}\end{table}
\end{center}
\end{proof}
\color{black}

\subsection{\label{sec:4D}Existence of Principal Cone }
  \edited{Here, we prove that over the intervals defined in section \hyperref[sec:4B]{$4B$,} the eigenvectors of the matrices $\{A_i\}$ form a principal cone, which will prove lemma \hyperref[lem:2]{$2$}.  The proof relies heavily} on lemmas \edited{\hyperref[lem:3]{$3$},} \hyperref[lem:4]{$4$}, \hyperref[lem:5]{$5$}, and \hyperref[lem:6]{$6$} as well as \edited{table \hyperref[tab:1]{$1$}} from the end of \edited{\hyperref[sec:4A]{$4A$}}.  \edited{We will also need the derivatives of the unstable and stable matrices with respect to $\lambda_0$ and $\lambda_1$ from \edited{ table \hyperref[tab:5]{$5$}} at the end of the proof of lemma \hyperref[lem:6]{$6$}}. Similar to subsection \edited{\hyperref[sec:4B]{$4B$}}, the \edited{3 orderings of the set $\{\lambda_0,c_1,c_1\lambda_1\}$ are all addressed separately.}
  \begin{proof} \
  
  \edited{Same as in section \hyperref[sec:4B]{$4B$}, we have a list of the different orderings of $\{c_1,\lambda_0,c+1+\lambda_1\}$, and there are sublists stating the different intervals over which the matrices are all hyperbolic.  Here, they are listed as scenarios, and an ordering of the eigenvectors are presented and then proven to be true.    
  \begin{enumerate}
      \item If we assume $\lambda_0\leq c_1$
      \begin{enumerate}
          \item[(Scenario a)]  To show $u_4<u_2,u_3<u_1<s_1<s_2,s_3<s_4<0:$ 
     \end{enumerate}
          Over this region, the trace of all matrices are greater than $2$.  Therefore, we can conclude that $r_i>1$ for all $i$, and, using \edited{lemma \hyperref[lem:3]{$3$} and table \hyperref[tab:1]{$1$}}, all the eigenvectors are negative and $u_i<s_i$.     Over this interval, we have the inequalities $\partial/\partial\lambda_i(u_j)\leq0\leq\partial/\partial\lambda_i(s_j)$ for all possible $i,j$.  \edited{From lemma \hyperref[lem:6]{$6$}, we get the}  order the eigenvectors, and hence a principal cone can be defined.
          \begin{enumerate}
    \item[(Scenario b)] To show $u_4<u_3<s_3<s_4,s_1,s_2<0<u_2<u_1:$
          \end{enumerate}
          The matrices $A_1$ \& $A_2$ have negative traces, and $A_3$ \& $A_4$ have positive traces.  This means $r_3,r_4>0>r_1,r_2$ \edited{from lemma \hyperref[lem:3]{$3$}}.  If $r_i<0,$ then $u_i$ and $s_i$ will have opposite signs \edited{which can be seen from lemma \hyperref[lem:4]{$4$} and, more explicitly, table \hyperref[tab:1]{$1$}  in subsection \hyperref[sec:4A]{$4A$}}.  Over this region we have the inequality, $u_4<u_3<s_3<s_4<0,$ because $\partial/\partial\lambda_1(u_4)\leq 0\leq\partial/\partial\lambda_1(s_4)$ \edited{and lemma \hyperref[lem:6]{$6$}.  Due to corollary \hyperref[cor:1]{$1$}}, $s_1$ is bounded by $s_3$.  It can be easily calculated that $0>s_1>s_3$, because $\lim_{E\rightarrow 0^+}s_1\nearrow 0,$ while $s_3(0)<0$ and continuous.  On top of that, $s_1<s_2$ because of lemma \hyperref[lem:6]{$6$} and the inequality $\partial/\partial\lambda_1 (s_2)\geq 0$ holds over this interval.  In the projective space, $\infty=-\infty$ is an element, and so the principal cone can be defined as $(0,\frac{u_3+s_3}{2})$, which contains the element $\infty$. 
          \begin{enumerate}
          \item[(Scenario c)] To show $s_1,s_2,s_3,s_4<0<u_1,u_2,u_3,u_4:$
          \end{enumerate}
          Over this entire interval, the traces of all four matrices are negative, making the eigenvectors different signs.  All four matrices have positive unstable eignevectors and negative stable eigenvectors.    The principal cone can be defined as $(0,\infty).$
          \begin{enumerate}
          \item[(Scenario d)]  To show $s_2,s_4<0<s_1<s_3<u_3<u_1,u_2,u_4:$
          \end{enumerate}
          The trace of $A_1$ and $A_3$ are positive, and the trace of $A_2$ and $A_4$ are negative.  This means that $r_1,r_3>1$ and $r_2,r_4<-1.$  \edited{Because of lemma \hyperref[lem:3]{$3$} and \hyperref[lem:4]{$4$}}, we get the inequalities  $s_4,s_2<0<s_1,s_3.$ Similarly, \edited{from lemma \hyperref[lem:3]{$3$}, \hyperref[lem:4]{$4$}}, and table \hyperref[tab:1]{$1$}, we get the inequalities $0<s_3<u_3$ and $0<s_1<u_1$.  To show $u_3$ serves as a lower bound for the unstable eigenvectors, the following two inequalities will suffice.  We can calculate $\partial/\partial \lambda_1(u_j)\leq 0$, so $u_3<u_1$ and $u_4<u_2$ by \edited{lemma \hyperref[lem:6]{$6$}}.  \edited{By corollary 1, $u_3$ bounds $u_4$.  As $E$ approaches the upper bound of this interval defining scenario d, $u_4\nearrow\infty$ and $u_3$ does not.  Therefore $u_3$ bounds $u_4$ from below.   This makes $u_3$} a lower bound of the unstable eigenvectors.  Because $\partial/\partial \lambda_1(s_3)>0,$ by lemma \hyperref[lem:6]{$6$}, we get the inequality $s_3>s_1$.  This is enough to give the inequalities for this scenario.
          \begin{enumerate}
          \item[(Scenario e)] To show $0<s_1<s_2,s_3<s_4<u_4<u_2,u_3<u_1:$
          \end{enumerate}
          Over this interval, assuming $(E-\lambda_0)(E-\lambda_1-c_1)>4$ which is necessary, $\partial/\partial\lambda_i(u_j)<0<\partial/\partial\lambda_i(s_j)$ for all possible $i,j$ and $s_4<u_4$.  All possible eigenvectors are positive, and\edited{ by lemma \hyperref[lem:6]{$6$}} the partial derivatives point to $s_i<s_4$ for all $i\neq 4$.  Similarly, $u_i>u_4>s_4$ for all $i\neq 4.$
  \item If we assume $c_1\leq \lambda_0<c_1+\lambda_1$ \begin{enumerate}\item[(Scenario a)] To show $u_4<u_2,u_3<u_1<s_1<s_2,s_3<s_4<0:$\end{enumerate} Here the proof is the same as scenario a  \edited{ if $\lambda_0\leq c_1$.}  \begin{enumerate}\item[(Scenario b)] To show $u_4<u_3<s_3<s_4,s_2,s_1<0<u_1<u_2:$ \end{enumerate} Here the proof is the same as scenario b  \edited{ if $\lambda_0\leq c_1$ .}
  \begin{enumerate}
  \item[(Scenario c)] Show $u_3<u_4<s_4<s_2<0<s_3<s_1<u_1<u_2: $ \end{enumerate} Over this interval, $tr(A_2),\;tr(A_3)<-2$ and $tr(A_4),\;tr(A_1)>2$, so we can determine the signs of the eigenvectors.  We get the inequalities $s_2,s_4,u_3,u_4<0<s_3,s_1,u_1,u_2$.  The matrices  $A_3$ \edited{ and $A_4$ are} hyperbolic for \begin{small}\[\edited{E\in\bigg(c_1,\frac{\lambda_0-\lambda_1-c_1-\sqrt{(\lambda_0-\lambda_1-c_1)^2-4\lambda_0(\lambda_1+c_1)+16}}{2}\bigg)}\]\end{small} and $\lim_{E\rightarrow c_1^+}u_3\searrow-\infty.$ \edited{The eigenvector $u_4$ satisfies the inequality $-\infty<u_4|_{E=c_1}<0$, however}, so we can deduce the inequalities $u_3<u_4<s_4<0$ using lemma \hyperref[lem:4]{$4$}, corollary \hyperref[cor:1]{$1$}, and $r_4>0$.  \edited{Similarly, $A_3$ and $A_1$ are hyperbolic over} \[\edited{E\in\Bigg(  \frac{c_1+\sqrt{c_1^2+16}}{2},\lambda_0   \Bigg)}\]as $\lim_{E\rightarrow \lambda_0^+}s_3\searrow 0$, \edited{but $s_1|_{E=\lambda_0}>0$ and $s_1$ is continuous.  Using lemma} \hyperref[lem:4]{$4$} and $r_1>0$, this is sufficient to show that $0<s_3<s_1<u_1$\edited{.  Additionally $A_2$ and $A_4$ are hyperbolic over} 
  \begin{small}\[\edited{E\in \bigg(0,\frac{\lambda_0-\lambda_1-c_1-\sqrt{(\lambda_0-\lambda_1-c_1)^2-4\lambda_0(\lambda_1+c_1)+16}}{2}\bigg)}.\]\end{small}\edited{Using the limit $\lim_{E\rightarrow 0}s_2\nearrow 0$, the eigenvector satisfying $s_4|_{E=0}<0,$ and continuity of $s_4$, corollary \hyperref[cor:1]{$1$} gives us $s_4<s_2<0.$  Lastly, $A_1$ and $A_2$ are hyperbolic over   \[ E\in\Bigg( \frac{c_1+\sqrt{c_1^2+16}}{2},c_1+\lambda_1\Bigg)\] }  \edited{Here, $u_2>0$ by lemma 3, $\lim_{E \rightarrow  c_1+\lambda_1^+} u_2\nearrow\infty$, and $u_1|_{E=c_1+\lambda_1}<\infty$, so by corollary 1 we have the inequality $0<u_1<u_2$}.  This completes the inequalities needed to define the principal cone. 
  \begin{enumerate} \item[(Scenario d)] To show $s_2,s_4<0<s_4<s_1,s_3<u_3<u_1,u_2,u_4:$ \end{enumerate} Here the proof is the same as scenario d  \edited{ if $\lambda_0\leq c_1$ .} \begin{enumerate} \item[(Scenario e)] To show $s_1<s_2,s_3<s_4<u_4<u_2,u_3<u_1:$ \end{enumerate} Here the proof is the same as scanrio e if $\lambda_0\leq c_1.$ \item If we assume $c_1+\lambda_1<\lambda_0$ \begin{enumerate}\item[(Scenario a)] To show $u_4<u_2,u_3<u_1<s_1<s_2,s_3<s_4<0:$ \end{enumerate} Here the proof is the same as scenario a \edited{ if $\lambda_0\leq c_1.$}  \begin{enumerate}\item[(Scenario b)] To show $u_4<u_3<s_3<s_4,s_2,s_1<0<u_1<u_2:$ \end{enumerate} Here the proof is the same as scenario b  \edited{ if $\lambda_0\leq c_1$ .}  
  \begin{enumerate}\item[(Scenario c)] Show $u_3<u_4<s_4<s_2<0<s_3<s_1<u_1<u_2:$ \end{enumerate} Here the proof is the same \edited{ if $c_1\leq\lambda_0< c_1+\lambda_1$} in scenario c.  
  \begin{enumerate}
 \item[(Scenario d)] To show $u_4<u_3<0<s_1,s_3,s_4<s_2<u_2<u_1:$\end{enumerate} The spectral radii satisfy the inequality $r_3,r_4<0<r_1,r_2$, So we can conclude that $u_4,u_3<0$, but all the other eigenvectors are greater than $0.$   \edited{By lemma 4}, we can also conclude that $s_2<u_2$.  By lemma \hyperref[lem:6]{$6$} and $0<\partial/\partial\lambda_1(s_2)$, we can deduce that $s_1<s_2$ \edited{by lemma 4}.  \edited{Given that }$\partial/\partial\lambda_1(s_4)>0$, so $s_3<s_4$ \edited{ by lemma \hyperref[lem:6]{$6$}}.  Given the limit $\lim_{E\rightarrow \lambda_1+c_1^+}s_4\searrow 0$, as well as the inequality $s_2|_{E=\lambda_1+c_1}>0,$ with $s_2$ being continuous, we have the inequality $0<s_4<s_2$ by corollary \hyperref[cor:1]{$1$}.  Last, we need the inequality $u_2<u_1$, which is true by $\partial/\partial\lambda_1(u_2)<0$ \edited{ by lemma \hyperref[lem:6]{$6$}}.   \vspace{1em}\begin{enumerate}\item[(Scenario e)] To show $s_1<s_2,s_3<s_4<u_4<u_2,u_3<u_1:$ \end{enumerate} Here the proof is the same as scenario e \edited{ if $\lambda_0<c_1.$}  \end{enumerate}}
  
  \end{proof}

\begin{widetext}
\subsection{\label{sec:4E}The Explicit Gaps in the Spectrum}

We can conclude that for all of the intervals listed as scenarios $a$ to $e$, the 4 matrices $A_1,A_2,A_3,\&A_4$ are hyperbolic and the set is uniformly hyperbolic.  The spectrum therefore consists of at most 4 intervals, which are the complement of the intervals in $E$ over which the set of matrices is uniformly hyperbolic.  Below the intervals are explicitly calculated.

\begin{prop}
\edited{The complement of the spectrum of $H_V$ from theorem 1 can be explicitly calculated, depending on the ordering of the set $\{\lambda_0,c_1,c_1+\lambda_1\},$ where $\lambda_i,c_1\geq 0$}

\begin{enumerate}
\item \edited{If we assume $ \lambda_0 \leq c_1 \leq c_1+\lambda_1$, then }

\[\edited{\mathbb{R} \backslash \sigma(H_V)}= \bigg(-\infty,\frac{1}{2}\Big(c_1-\sqrt{c_1^2+16}\big)\Bigg)\cup \bigg(0,\frac{1}{2}\Big(\lambda_0+c_1-\sqrt{(\lambda_0+c_1)^2-4(c_1\cdot\lambda_0-4)}\Big)\bigg)\cup (\lambda_0,c_1)\cup \]\[\bigg( \frac{1}{2}\Big(\lambda_0+c_1+\sqrt{(\lambda_0+c_1)^2-4(c_1\lambda_0-4)}\Big),c_1+\lambda_1\bigg)\cup\Big(\frac{1}{2}\Big(\lambda_0+\lambda_1+c_1+\sqrt{(\lambda_0+\lambda_1+c_1)^2-4\lambda_0\cdot(\lambda_1+c_1)+16},\infty\bigg)\]

\item If we assume $ c_1<\lambda_0 <  c_1+\lambda_1$, then then gaps in the spectrum consist of the intervals
\[ \mathbb{R} \backslash \sigma(H_V)=\bigg(-\infty , \frac{1}{2}\Big(c_1-\sqrt{c_1^2+16} \Big)\bigg)\cup\bigg(0 , \frac{1}{2}\Big(\lambda_0+c_1-\sqrt{(\lambda_0+c_1)^2-4(c_1\lambda_0-4)}\Big) \bigg)\cup\]\[\bigg(\frac{1}{2}\Big(c_1+\sqrt{c_1^2+16}\Big) ,  \frac{1}{2}\Big(\lambda_0+\lambda_1+c_1-\sqrt{(\lambda_0+\lambda_1+c_1)^2-4\lambda_0(\lambda_1+c_1)+16}\Big)\bigg)\cup\]\[\bigg( \frac{1}{2}\Big(\lambda_0+c_1+\sqrt{(\lambda_0+c_1)^2-4(c_1\lambda_0-4)}\Big) , c_1+\lambda_1  \bigg)\cup\bigg(\frac{1}{2}\Big(\lambda_0+\lambda_1+c_1+\sqrt{(\lambda_0+\lambda_1+c_1)^2-4(\lambda_0(\lambda_1+c_1)-4)}),\infty \bigg)\]  

\item If we assume $  c_1 \leq c_1+\lambda_1 \leq \lambda_0$, then the gaps in the spectrum consist of the intervals
\[\mathbb{R} \backslash \sigma(H_V)=\bigg(-\infty , \frac{1}{2}\Big(c_1-\sqrt{c_1^2+16}\Big) \bigg)\cup\bigg(0 , \frac{1}{2}\Big(\lambda_0+c_1-\sqrt{(\lambda_0+c_1)^2-4(c_1\lambda_0-4)}\Big) \bigg)\cup\]
\[\bigg(\frac{1}{2}\Big(c_1+\sqrt{c_1^2+16}\Big) ,  \frac{1}{2}\Big(\lambda_0+\lambda_1+c_1-\sqrt{(\lambda_0+\lambda_1+c_1)^2-4(\lambda_0(\lambda_1+c_1)-4)}\Big)\bigg)\cup\]\[\bigg(\frac{1}{2}\Big(c_1+\lambda_1+\sqrt{(c_1+\lambda_1)^2+16}\Big),\lambda_0\Bigg)\cup\bigg(\frac{1}{2}\Big(\lambda_0+\lambda_1+c_1+\sqrt{(\lambda_0+\lambda_1+c_1)^2-4(\lambda_0(\lambda_1+c_1)-4)}\Big),\infty\bigg)\]
\end{enumerate}
\end{prop}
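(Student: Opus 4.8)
The plan is to collapse the whole question onto the traces of the four product matrices and then solve the resulting quadratic inequalities. By Johnson's Theorem and Lemma 1 applied with $m=2$, an energy $E$ lies in the complement of $\sigma_{as}(H_V)$ exactly when the set $\{A_1,A_2,A_3,A_4\}$ of section 4A is uniformly hyperbolic, and by Lemma 2 — established in section 4D through the explicit principal cones — this is equivalent to each $A_i$ being individually hyperbolic. Since an $SL(2,\mathbb{R})$ matrix is hyperbolic iff the absolute value of its trace exceeds $2$, we obtain
\[\mathbb{R}\setminus\sigma(H_V)=\bigcap_{i=1}^{4}\{E : |tr(A_i)|>2\}.\]

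First I would read off the four traces from the matrix forms in section 4A, namely $tr(A_1)=E(E-c_1)-2$, $tr(A_2)=E(E-c_1-\lambda_1)-2$, $tr(A_3)=(E-\lambda_0)(E-c_1)-2$, and $tr(A_4)=(E-\lambda_0)(E-c_1-\lambda_1)-2$. Each condition $|tr(A_i)|>2$ is then a union of two quadratic inequalities; for instance $|tr(A_1)|>2$ holds iff $E(E-c_1)\notin[0,4]$, i.e. $E$ avoids the closed band between the roots $0,c_1$ of $E(E-c_1)=0$ and the roots $\tfrac12(c_1\pm\sqrt{c_1^2+16})$ of $E(E-c_1)=4$, and the other three are analogous under $E\mapsto E-\lambda_0$ and/or $c_1\mapsto c_1+\lambda_1$. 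The full list of thresholds is $\{0,\lambda_0,c_1,c_1+\lambda_1\}$ together with the roots of the four equations $tr(A_i)=\pm2$; because $c_1,\lambda_0,\lambda_1\ge0$ after the normalization of section 4A, the order type of this configuration, hence of the intersection, depends only on the ordering of $\{\lambda_0,\,c_1,\,c_1+\lambda_1\}$, which is exactly what produces the three cases of the proposition.

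For each of the three orderings I would then invoke the five defining conditions already isolated as Scenarios a)--e) in Proposition 1: each is the slice of $\bigcap_i\{|tr(A_i)|>2\}$ lying between two consecutive thresholds, with precisely the active quadratic inequalities retained. On each such slice the active inequality has the shape (a quadratic in $E$) $>4$ restricted to one side of its vertex, so the solution set is a ray intersected with the threshold interval, and solving it produces exactly the endpoints displayed in the proposition; concatenating the five intervals gives the stated formula for $\mathbb{R}\setminus\sigma(H_V)$ in that ordering. The hard part will be the bookkeeping rather than any single estimate: in each slice one must keep track of the signs of the factors $E-\lambda_0$, $E-c_1$, $E-c_1-\lambda_1$ to see which branch of each quadratic is relevant, verify that the right endpoint of one interval meets the left endpoint of the next so that no spurious sub-gaps or overlaps appear, and record that for some parameter values certain of the five intervals degenerate to $\varnothing$ (as already noted after Proposition 1). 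Carrying this out in all three orderings yields the proposition.
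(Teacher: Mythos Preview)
Your proposal is correct and follows exactly the paper's approach: the paper's own proof of this proposition is the single sentence ``This proposition follows from lemma 1 by direct computation,'' with the actual content being the reduction via Johnson's Theorem, Lemma~1, and Lemma~2 to $\bigcap_i\{E:|tr(A_i)|>2\}$ together with the scenario-by-scenario conditions already listed in Section~4B. You have simply spelled out that direct computation in more detail than the paper does.
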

     \edited{Note that in all scenarios, some of the intervals may be the empty set if the lower bound is greater than or equal to the upper bound.}  
\edited{We can conclude that $H_\nu-E$ has a bounded inverse if and only if $E$ is not in one of these, and the spectrum consists of the complement of these 5 intervals.  }

\edited{This proposition follows from lemma 1 by direct computation.}

\section{Acknowledgements}
I would like to acknowledge Anton Gorodetski for helping edit and put together this paper.  \edited{The author was supported in part by NSF grant DMS--1855541 (PI A.Gorodetski).}

\end{widetext}

\end{document}